\documentclass[11pt,english]{article}
\usepackage[utf8]{inputenc}
\usepackage[T1]{fontenc}
\usepackage{babel}
 \usepackage{amsmath}
 \usepackage{mathtools}
\usepackage{amsfonts}
\usepackage{amssymb}
\usepackage{graphicx}
\usepackage{amsthm}
\usepackage{bbm}
\usepackage{mathtools}
\usepackage[colorlinks=true,
            linkcolor=blue,
            urlcolor=blue,
            citecolor=blue]{hyperref}
\usepackage{authblk}
\usepackage{geometry}
\usepackage{algorithm}
\usepackage[noend]{algpseudocode}
\usepackage[vcentermath]{youngtab}
\usepackage{pgfplots}
\pgfplotsset{compat=newest}
\usetikzlibrary{calc}
\usepackage{mathrsfs}
\geometry{hmargin=2cm,vmargin=2cm}

 \author{Mohamed Slim Kammoun\footnote{mkammoun@math.univ-toulouse.fr}}
\affil{IMT, Université de Toulouse}

\title{A note on monotone subsequences and the RS image of invariant random permutations with macroscopic number of fixed points}
\newtheorem{theorem}{Theorem}
\newtheorem{corollary}[theorem]{Corollary}
\newtheorem{lemma}[theorem]{Lemma}
\newtheorem{proposition}[theorem]{Proposition}
\newtheorem{conjecture}[theorem]{Conjecture}
\theoremstyle{definition}

\usepackage{tikz}
\usetikzlibrary{automata, positioning}
\usepackage{natbib}
\bibliographystyle{abbrvnat}

\begin{document}
\maketitle
\begin{abstract} 

The work of \cite{MR0480398,LOGAN1977206} established that the shape of the scaled random young diagram in Russian notation, as determined by the Plancherel measure, converges to a deterministic shape.
In this article, we focus on the scenario where the number of fixed points is substantial. We provide evidence that, subject to specific requirements on the total number of cycles, the limiting shape is a scaled version of the Vershik-Kerov-Logan-Shepp limiting shape. Additionally, we identify certain limiting regimes that resemble those in \cite*{wel}.
Furthermore, we enhance the existing results on Tracy-Widom universality classes for $\beta \in {1,2,4}$ for monotone subsequences.

\end{abstract}
\section{Main results}
\paragraph{}

Let $\mathfrak{S}_n$ be the  group of permutations of $\{1,\dots,n\}$.  We will use in this paper  the well-known application on the symmetric group $\mathfrak{S}_n$ with values in $\mathbb{Y}_n$ known as the shape of the image of a permutation $\sigma$ by the Robinson–Schensted correspondence (RS) \citep*{RSKR,MR0121305} or the Robinson–Schensted–Knuth correspondence \citep*{RSKK}. We denote it by $$\lambda(\sigma)=\{\lambda_i(\sigma)\}_{i\geq 1}.$$ We will not include here algorithmic details. For further reading, we recommend \citep*[Chapter~3]{Sagan2001} and \citep{MR3468738}. A remarkable feature of the shape of RS is that it is entirely determined by monotone subsequences. Indeed,  if we denote by \begin{align*}
\mathfrak{I}_1(\sigma):&=\{s\subset\{1,2,\dots,n\};\; \forall i,j \in s,\; (i-j)(\sigma(i)-\sigma(j))\geq 0 \},
\\ \mathfrak{D}_1(\sigma):&=\{s\subset\{1,2,\dots,n\};\; \forall i,j \in s,\; (i-j)(\sigma(i)-\sigma(j))\leq 0 \},
\\\mathfrak{I}_{k+1}(\sigma):&=\{s\cup s',\; s\in \mathfrak{I}_k,\;s'\in \mathfrak{I}_1\},
\\ \mathfrak{D}_{k+1}(\sigma):&=\{s\cup s',\; s\in \mathfrak{D}_k,\;s'\in \mathfrak{D}_1\},
\end{align*}
we have the following.
\begin{proposition}
    \label{RSKLEMMA} \citep*{GREENE1974254}
For any permutation $ \sigma\in \mathfrak{S}_n$,
\begin{align*}
\max_{s\in \mathfrak{I}_i(\sigma)} |s| =\sum_{k=1}^i \lambda_k(\sigma), \quad
\max_{s\in \mathfrak{D}_i(\sigma)} |s| =\sum_{k=1}^i \lambda'_k(\sigma).
\end{align*}
\end{proposition}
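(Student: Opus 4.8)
The plan is to derive Proposition~\ref{RSKLEMMA} from the Robinson--Schensted correspondence through Knuth equivalence. Write $I_k(\sigma):=\max_{s\in\mathfrak{I}_k(\sigma)}|s|$ and $D_k(\sigma):=\max_{s\in\mathfrak{D}_k(\sigma)}|s|$, so that the assertion is $I_k(\sigma)=\sum_{j\le k}\lambda_j(\sigma)$ and $D_k(\sigma)=\sum_{j\le k}\lambda'_j(\sigma)$ for all $k\ge 1$. I would lean on two facts. The first is Knuth's theorem: the one-line word $\sigma(1)\sigma(2)\cdots\sigma(n)$ is Knuth-equivalent to the reading word $w(P(\sigma))$ of its RS insertion tableau $P(\sigma)$ (bottom row left to right, then the row above, and so on), and $w(P(\sigma))$ is the reading word of a standard Young tableau of shape $\lambda(\sigma)$. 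The second is that $I_k$ and $D_k$ are constant on Knuth equivalence classes. Granting both, everything reduces to evaluating $I_k$ and $D_k$ on the reading word $w=w(T)$ of a standard Young tableau $T$ of an arbitrary shape $\mu$.

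This last evaluation is elementary. Reading $w(T)$ bottom-up, the rows of $T$ (each scanned left to right) appear as disjoint consecutive blocks $R_1,\dots,R_{\ell(\mu)}$ partitioning the letter positions; each $R_i$ is an increasing subsequence of length $\mu_i$, so $R_1\cup\dots\cup R_k$ gives $I_k(w)\ge\mu_1+\dots+\mu_k$. Dually, the columns of $T$, each scanned from its bottom cell upward, are decreasing subsequences $C_1,\dots,C_{\mu_1}$ of lengths $\mu'_1,\dots,\mu'_{\mu_1}$ that also partition the positions, whence $D_k(w)\ge\mu'_1+\dots+\mu'_k$. For the reverse inequalities I would use that an increasing and a decreasing subsequence meet in at most one position: for $s=s_1\cup\dots\cup s_k\in\mathfrak{I}_k(w)$ one has $|s\cap C_j|\le\min(k,\mu'_j)$ for every $j$, so $|s|=\sum_j|s\cap C_j|\le\sum_{j=1}^{\mu_1}\min(k,\mu'_j)=\sum_{i=1}^{k}\mu_i$, the final equality counting the cells of $\mu$ in its first $k$ rows column by column; $D_k(w)\le\mu'_1+\dots+\mu'_k$ follows identically using the blocks $R_i$. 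Combined with the two facts above, $I_k(\sigma)=I_k(w(P(\sigma)))=\sum_{j\le k}\lambda_j(\sigma)$, and similarly for $D_k$.

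The main obstacle is the Knuth invariance of $I_k$ and $D_k$: that an elementary Knuth transformation --- $\cdots bca\cdots\leftrightarrow\cdots bac\cdots$ with $a<b\le c$, or $\cdots acb\cdots\leftrightarrow\cdots cab\cdots$ with $a\le b<c$, acting on three consecutive positions --- preserves both statistics. This is a local matter: starting from an optimal union $s$ of $k$ increasing subsequences for one of the two words, only the way the chains of $s$ traverse the three affected positions can change, and one must go through the finitely many patterns describing, for each chain, which of the three middle letters it enters from the left and which it exits to the right, and produce in each pattern a union of $k$ increasing subsequences of the same size for the other word; the strict/weak inequalities distinguishing the two transformation types are precisely what make every pattern transferable, and $D_k$ is handled by the dual bookkeeping. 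An alternative that avoids this case analysis is Viennot's shadow-line construction of the Robinson--Schensted correspondence, from which optimal unions of $k$ increasing and of $k$ decreasing subsequences can be read off directly from the planar picture; I would fall back on it if the bookkeeping for the Knuth moves proved unwieldy.
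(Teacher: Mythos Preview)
The paper does not prove Proposition~\ref{RSKLEMMA} at all: it is stated as Greene's theorem, with a citation to \cite{GREENE1974254} and a pointer to \citep[Theorem~3.7.3]{Sagan2001}. So there is no ``paper's proof'' to compare against; the only question is whether your argument stands on its own.

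Your outline is the standard Knuth-equivalence proof of Greene's theorem (this is essentially the treatment in Fulton's \emph{Young Tableaux} and in Sagan's book cited by the paper). The computation of $I_k$ and $D_k$ on the row-reading word of a standard tableau is correct: rows and columns give the lower bounds, and the counting argument $|s\cap C_j|\le\min(k,\mu'_j)$ together with $\sum_j\min(k,\mu'_j)=\sum_{i\le k}\mu_i$ gives the matching upper bound. The reduction to reading words via Knuth equivalence is also correct in principle.

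The one place where your write-up is not yet a proof is exactly where you flag it: the invariance of $I_k$ and $D_k$ under elementary Knuth moves. You describe the shape of the argument accurately, but the actual case analysis---tracking how up to $k$ chains can pass through the three affected positions and exhibiting, in every pattern, a rearrangement of the same total size after the move---is genuinely the nontrivial content of Greene's theorem, and ``go through the finitely many patterns'' hides real work (one must in particular handle the situation where two chains interact at the triple and need to be re-spliced). If you intend this as a proof rather than a proof sketch, that step must be written out; otherwise, citing Greene or Sagan as the paper does is the honest option. Your fallback via Viennot's geometric construction is also a legitimate route, but it too requires a careful argument to extract the optimality of the shadow-line decomposition for general $k$.
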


In particular, $\lambda_1(\sigma)$ is the length of the longest increasing subsequence and $ \lambda'_1(\sigma)=\ell(\lambda(\sigma))$  is the length of the longest decreasing subsequence.
\paragraph*{} This result is proved first by \cite{GREENE1974254} (see also \citep*[Theorem 3.7.3]{Sagan2001}). Let now $L_{\lambda(\sigma)}$ be the height function of $\lambda(\sigma)$   rotated by $\frac{3\pi}{4}$  and extended by the function $x\mapsto |x|$ to obtain a function defined on $\mathbb{R}$. For example,  if $\lambda(\sigma)=(7,5,2,1,1,\underline{0})$ the associated function $L_{\lambda(\sigma)}$ is represented by Figure \ref{figL}.
To introduce our main result we need to introduce some additional notations. Given $\sigma \in \mathfrak{S}_n$, we denote by  $\#(\sigma)$ the number of cycles of $\sigma$ 
and by $\#_1(\sigma)$ its  total number of fixed points.  For example, for the permutation \begin{equation*}\sigma=\begin{pmatrix}
 1& 2 & 3 & 4 & 5 & 6 \\ 
 5& 3 & 2 & 1 & 4 & 6
\end{pmatrix},\end{equation*}
 $\#(\sigma)=3$ and $\#_1(\sigma)=1$.
In our work,
we are interested in conjugacy invariant  permutations $\sigma_n$  i.e. for any permutation $\rho \in \mathfrak{S}_n$,
$\rho\sigma_n\rho^{-1}$ has the same distribution as $\sigma_n$. Our main result is the following.


\begin{figure}[]
\centering
\begin{tikzpicture}    [/pgfplots/y=0.4cm, /pgfplots/x=0.4cm]
      \begin{axis}[
    axis x line=center,
    axis y line=center,
    xmin=0, xmax=10,
    ymin=0, ymax=10, clip=false,
    ytick={0},
	xtick={0},
    minor xtick={0,1,2,3,3,4,5,6,7,8,9},
    minor ytick={0,1,2,3,3,4,5,6,7,8,9},
    grid=both,
    legend pos=north west,
    ymajorgrids=false,
    xmajorgrids=false, anchor=origin,
    grid style=dashed    , rotate around={45:(rel axis cs:0,0)}
,
]

\addplot[
    color=blue,
        line width=3pt,
    ]
    coordinates {
    (0,10)(0,7)(1,7)(1,5)(2,5)(2,2)(3,2)(3,1)(5,1)(5,0)(10,0)
    };
 
\end{axis}
\begin{axis}[
	axis x line=center,
    axis y line=center,
    xmin=-7.07, xmax=7.07,
    ymin=0, ymax=8, anchor=origin, clip=false,
    xtick={-7,-6,-5,-4,-3,-2,-1,0,1,2,3,4,5,6,7},
    ytick={0,1,2,3,3,4,5,6,7,8},
    legend pos=north west,
    ymajorgrids=false,
    xmajorgrids=false,rotate around={0:(rel axis cs:0,0)},
    grid style=dashed];
\end{axis}
    \end{tikzpicture}
    \caption{ $L_{(7,5,2,1,1,\underline{0})}$}
     \label{figL}
\end{figure}
\begin{theorem} \label{thm2}
Assume that for any $n$, $\sigma_n$ is a conjugacy invariant random permutation of size $n$ and
\begin{equation}
\frac{\min\left({n-\#_1(\sigma_n^2)}\,;\,{\#(\sigma_n)- \#_1(\sigma_n)} \right)}{n}  \xrightarrow[n\to\infty]{\mathbb{P}} 0.
\end{equation}
Then
\begin{align}
\sup_{s\in \mathbb{R}} \left|\frac{1}{2\sqrt{n}}L_{\lambda(\sigma_n)}\left({2s\sqrt{n}}\right)-\sqrt{1-\frac{\#_1(\sigma_n)}{n}}\Omega\left(s \sqrt{1-\frac{\#_1(\sigma_n)}{n}} \right)\right|\xrightarrow[n\to\infty]{\mathbb{P}}0.
\end{align}
where  
\begin{align*}
\Omega(s):=\begin{cases}
\frac{2}{\pi}(s\arcsin({s})+\sqrt{1-s^2}) & \text{ if } |s|<1 \\ 
|s| & \text{ if } |s|\geq 1 
\end{cases}.
\end{align*}
\end{theorem}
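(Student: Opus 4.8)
The strategy is to transfer the Vershik--Kerov--Logan--Shepp asymptotics from the fixed-point-free part of $\sigma_n$, using conjugacy invariance to isolate that part and Greene's theorem (Proposition~\ref{RSKLEMMA}) to control the profile through monotone subsequences. Write $m:=\#_1(\sigma_n)$ and $n':=n-m$. Conditioning on the cycle type and exploiting conjugacy invariance, one may realise $\sigma_n$ as the identity on a uniformly random $m$-element set $F\subset\{1,\dots,n\}$ together with a conjugacy invariant fixed-point-free permutation $\tau_n$ of $F^{c}$; the hypothesis then transfers to $\tfrac1n\min\big(n'-\#_1(\tau_n^{2})\,;\,\#(\tau_n)\big)\xrightarrow[n\to\infty]{\mathbb P}0$, so that with probability tending to $1$ either $\tau_n$ agrees with a uniform fixed-point-free involution outside an index set of size $o(n)$, or it has $o(n)$ cycles. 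In either regime the results recalled in the introduction give $\sup_{t\in\mathbb R}\big|\tfrac{1}{2\sqrt{n'}}L_{\lambda(\tau_n)}(2t\sqrt{n'})-\Omega(t)\big|\xrightarrow[n\to\infty]{\mathbb P}0$ (for the first regime this is the fixed-point-free involution limit shape, again $\Omega$, together with a robustness estimate absorbing the $o(n)$ exceptional indices; for the second it is the cycle-count universality). Since the $\min$ being small only forces \emph{one} of the two alternatives, one fixes $\varepsilon>0$, runs the argument on the two (possibly $n$-dependent) events $\{n'-\#_1(\tau_n^2)\le\varepsilon n\}$ and $\{\#(\tau_n)\le\varepsilon n\}$, and lets $\varepsilon\downarrow0$.

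The second ingredient is the dictionary between $L_\mu$ and Greene's invariants. Since $\mu_i-i$ and $\mu'_j-j$ are strictly decreasing, $L_\mu$ is — up to the fixed normalisation used in the paper — determined by, and bi-Lipschitz in, the generalised inverses of $i\mapsto\mu_i-i$ and $j\mapsto\mu'_j-j$, i.e.\ by the step functions $c\mapsto\#\{i:\mu_i-i\ge c\}$ and their mirror images; by Proposition~\ref{RSKLEMMA} these are equivalently encoded by $\max_{s\in\mathfrak I_i(\cdot)}|s|=\sum_{k\le i}\mu_k$ and $\max_{s\in\mathfrak D_i(\cdot)}|s|=\sum_{k\le i}\mu'_k$. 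Hence uniform closeness of two rescaled profiles $\tfrac1{2\sqrt n}L_{\mu}(2s\sqrt n)$ is equivalent, up to an $O(1/\sqrt n)$ error, to uniform closeness (over $i=O(\sqrt n)$) of the rescaled Greene invariants, outside of which both profiles equal $|s|$. Note also that, because $F$ is an increasing subsequence, every decreasing subsequence of $\sigma_n$ meets $F$ at most once; hence $\lambda'_1(\sigma_n)\le\lambda'_1(\tau_n)+1=O(\sqrt n)$ with high probability, and only the range of contents $O(\sqrt n)$, together with the ``tail'' $|c|\le m$, needs attention.

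The crux is a combinatorial lemma to the effect that \emph{attaching the $m$ random fixed points to $\tau_n$ only prepends, in the limit-shape sense, a single row of length $m$}: with probability tending to $1$, uniformly in $i\le O(\sqrt n)$,
\begin{equation*}
\max_{s\in\mathfrak I_i(\sigma_n)}|s|=m+\max_{s\in\mathfrak I_{i-1}(\tau_n)}|s|+o(\sqrt n),\qquad
\max_{s\in\mathfrak D_i(\sigma_n)}|s|=\max_{s\in\mathfrak D_i(\tau_n)}|s|+o(\sqrt n).
\end{equation*}
The decreasing statement, and the lower bounds, are easy: for the first identity one uses $F$ itself as one increasing subsequence together with $i-1$ increasing subsequences of $\tau_n$; for the upper bounds one splits the $i$ chains into $F$- and $\tau_n$-coordinates, the former contributing at most $m$ (at most one per chain in the decreasing case) and the latter forming a union of $i$ monotone subsequences of $\tau_n$. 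The real difficulty — and the \textbf{main obstacle} — is the upper bound $\max_{s\in\mathfrak I_i(\sigma_n)}|s|\le m+\max_{s\in\mathfrak I_{i-1}(\tau_n)}|s|+o(\sqrt n)$, which requires showing that the diagonal points supplied by $F$ cannot be profitably woven into $i$ (as opposed to $i-1$) long increasing subsequences of $\tau_n$. This is precisely where the cycle hypothesis is used: it forces $\tau_n$ to carry no macroscopic monotone pattern confined to an $o(\sqrt n)$-band about the diagonal, so that any fixed point inserted into a chain already carrying a far-from-diagonal point of $\tau_n$ destroys a number of remaining fixed points proportional to the displacement of that point; quantifying this uniformly in $i$ and with high probability over $F$ and over the conjugacy invariant $\tau_n$ — via a first/second-moment count of near-diagonal monotone patterns of $\tau_n$ — is the technical heart of the argument.

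Granting the lemma, one concludes as follows. By the dictionary, $\tfrac1{2\sqrt n}L_{\lambda(\sigma_n)}(2s\sqrt n)$ differs, uniformly in $s\in\mathbb R$ and in probability, by $o(1)$ from $\tfrac1{2\sqrt n}L_{\lambda(\tau_n)}(2s\sqrt n)$ on the range of contents $O(\sqrt n)$, while on the tail $|c|\le m$ it lies within bounded distance of $|c|$, so after rescaling by $\tfrac1{2\sqrt n}$ the tail contributes only $O(1/\sqrt n)$ and both sides coincide with $|s|$ for $|s|$ outside a compact set. Finally, the identity $\tfrac1{2\sqrt n}L_{\lambda(\tau_n)}(2s\sqrt n)=\sqrt{n'/n}\cdot\tfrac1{2\sqrt{n'}}L_{\lambda(\tau_n)}\big(2(s\sqrt{n/n'})\sqrt{n'}\big)$ together with the limit shape for $\tau_n$ from the first step turns this into precisely the right-hand side of Theorem~\ref{thm2}; combining the two suprema, which agree off a compact set, finishes the proof, the mode of convergence being inherited from the in-probability statements for $\tau_n$ and from the high-probability lemma.
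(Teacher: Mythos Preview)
Your overall architecture --- strip off the fixed points, obtain the limit shape for the fixed-point-free part $\tau_n$ via the two regimes, then transfer back --- is exactly the paper's. The transfer step, however, is where your argument diverges and acquires a real gap.

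You reduce the comparison of $L_{\lambda(\sigma_n)}$ and $L_{\lambda(\tau_n)}$ to a ``dictionary'' between profiles and Greene invariants that is essentially linear: you claim you need
\[
\sum_{j\le i}\lambda_j(\sigma_n)=m+\sum_{j\le i-1}\lambda_j(\tau_n)+o(\sqrt n)
\quad\text{uniformly in }i,
\]
and you flag the upper bound here as the \emph{main obstacle}, to be attacked by a moment computation on near-diagonal monotone patterns. But this estimate is \emph{not known}: for $i=2$ it would force $\lambda_2(\sigma_n)=\lambda_1(\tau_n)+o(\sqrt n)\sim 2\sqrt{n'}$, whereas the paper can only prove $\lambda_2(\sigma_n)/\sqrt n\in(2-\varepsilon,4+\varepsilon)$ (Lemma~\ref{main_lemma}) and explicitly leaves the sharper statement as a conjecture. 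So your ``main obstacle'' is genuinely open, and your sketch does not overcome it.

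The point you are missing is that this obstacle is entirely avoidable. The map from partial-sum discrepancies to profile discrepancies is \emph{sublinear}, and moreover one may discard finitely many rows at negligible cost: this is precisely Lemma~\ref{34}, which for $l=1$ gives
\[
\sup_{s}\bigl|L_{\lambda(\sigma_n)}(s)-L_{\lambda(\tau_n)}(s)\bigr|
\le 2\sqrt{\max_{m\ge 2}\Bigl|\sum_{k=2}^m\bigl(\lambda_k(\sigma_n)-\lambda_k(\tau_n)\bigr)\Bigr|}+\sqrt 2.
\]
Now the \emph{trivial} deterministic bounds of Lemma~\ref{l15} (your own ``easy'' inequalities) already give that the inner maximum is at most $\lambda_1(\tau_n)$, so the right-hand side is $O\bigl(\sqrt{\lambda_1(\tau_n)}\bigr)=O(n^{1/4})$, which vanishes after dividing by $2\sqrt n$. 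No information about how the fixed points interlace with chains of $\tau_n$ is required; in particular there is no need to show that the diagonal points ``cannot be woven into $i$ chains''. Your linear dictionary misses both the square root and the freedom to skip the first row (the parameter $l$ in Lemma~\ref{34}), and this is exactly what turns the problem from open into routine.
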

Our main result can be seen as a generalization various previous works.  
The typical shape  when the permutation is uniform (resp. a uniform involution)  was studied separately by \cite{LOGAN1977206} and \cite{MR0480398} (resp \cite{Mliot2011KerovsCL}). In a previous paper \citep{2018arXiv180505253S}, we studied the case where the number of cycles is microscopic. 
  For the longest monotone subsequences, we have the following.
\begin{theorem} \label{thm3}
Assume that for any $n$, $\sigma_n$ is a conjugacy invariant random permutation of size $n$  and that $n-\#_1(\sigma)\xrightarrow[n\to\infty]{\mathbb{P}} \infty$

\begin{itemize}
\item 
if 
\begin{equation}\label{cond3.1}
\frac{\#(\sigma_n)- \#_1(\sigma_n)}{{n^{\frac16}}}  \xrightarrow[n\to\infty]{\mathbb{P}} 0.
\end{equation}
Then 
\begin{align} \label{EQ3.1}
\frac{\ell(\lambda(\sigma_n))-2\sqrt{n-\#_1(\sigma_n)}}{(n-\#_1(\sigma_n))^\frac{1}{6}} \xrightarrow[n\to\infty]{d}TW_2.
\end{align}
\item if 
\begin{equation}\label{cond3.2}
\frac{n- \#_1(\sigma_n^2)}{{n^{\frac16}}}  \xrightarrow[n\to\infty]{\mathbb{P}} 0.
\end{equation}
Then 
\begin{equation}
\label{EQ3.2} \frac{\ell(\lambda(\sigma_n))-2\sqrt{n}}{n^\frac{1}{6}} \xrightarrow[n\to\infty]{d}TW_1.
\end{equation}
\item if 
\begin{equation}\label{cond3.3}
\frac{n- 2\#_2(\sigma_n)}{{n^{\frac16}}}  \xrightarrow[n\to\infty]{\mathbb{P}} 0.
\end{equation}
Then 
\begin{equation} \label{EQ3.3}
    \frac{\lambda_1(\sigma_n)-2\sqrt{n}}{n^\frac{1}{6}} \xrightarrow[n\to\infty]{d}TW_4.
\end{equation}
    \item if 
\begin{equation} \label{cond3.4}
\frac{\min\left({n-\#_1(\sigma_n^2)}\,;\,{\#(\sigma_n)- \#_1(\sigma_n)} \right)}{\sqrt{n}}  \xrightarrow[n\to\infty]{\mathbb{P}} 0.
\end{equation}
Then 
\begin{align} \label{EQ3.4}
\frac{\ell(\lambda(\sigma_n))}{\sqrt{n-\#_1(\sigma_n)}} \xrightarrow[n\to\infty]{\mathbb{P}}2    
\end{align}

\end{itemize}
\end{theorem}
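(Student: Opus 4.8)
The plan is to prove all four parts by one scheme: pass to monotone subsequences via Proposition~\ref{RSKLEMMA}, delete the fixed points, and then compare the resulting fixed‑point‑free permutation with an explicit ``reference'' permutation whose edge asymptotics are classical. Two stability facts drive the comparison. (i) If a permutation $\sigma$ is the \emph{order shuffle} of $A$ and $B$ (i.e.\ $A,B$ are placed order‑isomorphically on disjoint sets of positions and disjoint sets of values), then any increasing (resp.\ decreasing) subsequence of $\sigma$ splits into one in the $A$‑part and one in the $B$‑part, so $\lambda_1(\sigma)\le\lambda_1(A)+\lambda_1(B)$, $\lambda_1'(\sigma)\le\lambda_1'(A)+\lambda_1'(B)$, while $\lambda_1(\sigma)\ge\lambda_1(A)$ and $\lambda_1'(\sigma)\ge\lambda_1'(A)$. (ii) Deleting one letter of a permutation word removes exactly one box from the RS shape, hence multiplying a permutation by a transposition (two letters change) moves the shape by at most four boxes in $\ell^1$; in particular $k$ transpositions move each $\lambda_i$ and each $\lambda_i'$ by at most $2k$. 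Conjugacy invariance is used only in the form: conditionally on its cycle type, $\sigma_n$ is uniform among permutations of that type.

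Removing the fixed points. Write $\sigma_n$ as the order shuffle of $\mathrm{id}_F$ ($F$ the set of fixed points, $|F|=\#_1(\sigma_n)$) and a permutation $\tilde\sigma_n$ on $m_n:=n-\#_1(\sigma_n)$ points. Since $\lambda_1'(\mathrm{id}_F)=1$, fact (i) gives $\lambda_1'(\tilde\sigma_n)\le\ell(\lambda(\sigma_n))\le\lambda_1'(\tilde\sigma_n)+1$, and because $m_n\to\infty$ this $O(1)$ error is negligible on every scale relevant to parts 1, 2 and 4. For part 3, where fact (i) only yields $\lambda_1(\sigma_n)\le\lambda_1(\tilde\sigma_n)+\#_1(\sigma_n)$, one uses that \eqref{cond3.3} forces $\#_1(\sigma_n)\le n-2\#_2(\sigma_n)=o(n^{1/6})$. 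Conditionally on its cycle type $\tilde\sigma_n$ is uniform of that type, hence conjugacy invariant and fixed‑point‑free, and $\#(\sigma_n)-\#_1(\sigma_n)$, $\#_2$, and $n-\#_1(\sigma_n^2)=m_n-2\#_2(\tilde\sigma_n)$ are all unchanged. So one may assume $\sigma_n$ fixed‑point‑free of size $m_n\to\infty$.

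Comparison and conclusion. In each regime split $\tilde\sigma_n$ as the order shuffle of a structured majority $S_n$ and a small remainder $R_n$. Under \eqref{cond3.1} (and in the branch of \eqref{cond3.4} in which $\#(\sigma_n)-\#_1(\sigma_n)$ is small), $\tilde\sigma_n$ has few nontrivial cycles; merging them with $\#(\tilde\sigma_n)-1$ transpositions and using fact (ii), it is close in RS shape to a uniform $m_n$‑cycle (equivalently, invoke the microscopic‑cycle analysis of \cite{2018arXiv180505253S}). Under \eqref{cond3.2}, \eqref{cond3.3} (and in the other branch of \eqref{cond3.4}), delete the few elements lying in cycles of length $\neq2$: the remainder is a conjugacy‑invariant fixed‑point‑free permutation all of whose cycles are $2$‑cycles, which is \emph{forced} to be the uniform fixed‑point‑free involution of its size. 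By fact (i), $\lambda_1'(S_n)\le\lambda_1'(\tilde\sigma_n)\le\lambda_1'(S_n)+\lambda_1'(R_n)$ (likewise for $\lambda_1$), and $R_n$ is absorbed using the law of large numbers $\lambda_1'(R_n)=2\sqrt{|R_n|}\,(1+o_{\mathbb P}(1))$ for conjugacy‑invariant fixed‑point‑free permutations: this is $o(\sqrt{m_n})$ (resp.\ $o(m_n^{1/6})$) whenever $|R_n|=o(m_n)$ (resp.\ $|R_n|=o(m_n^{1/3})$), which the hypotheses provide once $m_n$ is comparable to $\sqrt n$ or larger, while if $m_n=o(\sqrt n)$ then \eqref{cond3.4} holds for free and the same law of large numbers is applied directly to $\tilde\sigma_n$. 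One then reads off the statements from the known behaviour of the reference: $TW_2$ for the longest monotone subsequences of a uniform permutation / uniform $n$‑cycle (Baik–Deift–Johansson; \cite{2018arXiv180505253S}) gives \eqref{EQ3.1}; $TW_1$, resp.\ $TW_4$, for the longest decreasing, resp.\ increasing, subsequence of a uniform fixed‑point‑free involution (Baik–Rains) gives \eqref{EQ3.2} and \eqref{EQ3.3}; and the $2\sqrt{\,\cdot\,}$ law of large numbers for those models (\cite{LOGAN1977206,MR0480398,Mliot2011KerovsCL}) gives \eqref{EQ3.4}, after letting the threshold separating the two branches of \eqref{cond3.4} tend to $0$ along a suitable subsequence. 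Part 4 can also be obtained from Theorem~\ref{thm2}, whose hypothesis \eqref{cond3.4} implies, by extracting the first‑column length of $\lambda(\sigma_n)$ from the limiting profile.

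Main obstacle. The crux is the universality of the typical shape of a conjugacy‑invariant fixed‑point‑free permutation of size $m\to\infty$: establishing $\lambda_1'\sim2\sqrt m$ with the sharp constant for an \emph{arbitrary} fixed‑point‑free cycle type (a first–moment bound over candidate decreasing subsequences, made tractable by conjugacy invariance, only gives $O(\sqrt m)$) requires a Plancherel‑type asymptotic analysis, and this lemma is what absorbs $R_n$ in parts 1–3 and carries all of part 4 when $m_n$ is not of order $n$. The second delicate, but more routine, point is quantitative: one must check that the $o(\cdot)$ deletions and transpositions used in the comparison perturb $\lambda_1$ and $\lambda_1'$ by $o(m_n^{1/6})$, so that the Tracy–Widom fluctuations of the reference survive — precisely what fact (ii) together with the bookkeeping on $|R_n|$ is designed to guarantee.
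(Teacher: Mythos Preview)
Your scheme is close to the paper's: strip the fixed points via \eqref{leq4} and then compare the fixed-point-free remainder to an explicit reference whose edge asymptotics are classical. Parts~1 and the first branch of part~4 are handled by the paper in exactly this way (conditioning on $fix(\sigma_n)$ and invoking Lemma~\ref{l16}). For parts~2, 3 and the second branch of~4 the routes diverge: the paper applies the Cayley-graph universality theorem \cite[Theorem~30]{KAMMOUN202276} as a black box, with reference class the involutions with a prescribed number of $2$-cycles, while you argue by hand via the order-shuffle split $\tilde\sigma_n=S_n\sqcup R_n$ and the Lipschitz bound on the RS shape. Since Theorem~30 is essentially a packaged form of ``transpositions move the shape by $O(1)$'', the two arguments are the same idea and yours is a legitimate, more transparent alternative.

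There is, however, a real gap. The ``main obstacle'' you name --- the law of large numbers $\lambda_1'\sim 2\sqrt m$ for an \emph{arbitrary} conjugacy-invariant fixed-point-free permutation --- is not a lemma you can cite: it is exactly Conjecture~\ref{conj1} of this paper, which is open. In parts~2 and~3 this does no harm, because there $|R_n|=o(n^{1/6})$ and the trivial bound $\lambda_1'(R_n)\le|R_n|$ already absorbs the remainder; the sharp constant~$2$ is irrelevant. But in part~4, in the regime ``$m_n=o(\sqrt n)$'' that you single out, you apply this very law of large numbers \emph{directly to $\tilde\sigma_n$}, and there it is genuinely unavailable. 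The paper circumvents this by reversing the order of operations: it first uses \cite[Theorem~30]{KAMMOUN202276} to reduce $\sigma_n$ to a conjugacy-invariant \emph{involution} of size~$n$, and only then removes the fixed points, landing on a uniform fixed-point-free involution of size~$m_n$ --- a specific model for which the $2\sqrt{\cdot}$ law is known (Corollary~\ref{Meliot_kam}). Your route removes fixed points first and is then stuck with a general fixed-point-free cycle type. A smaller point to check: you assert $TW_1$ for the longest \emph{decreasing} subsequence of a uniform fixed-point-free involution, but the paper's Proposition~\ref{BRProp} records $F_4$ for both $\ell$ and $\underline\ell$ in that model; make sure you have the correct statistic/class pairing before reading off \eqref{EQ3.2} and \eqref{EQ3.3}.
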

When  $\#_1(\sigma_n)= (\theta n/\log(n)) (1+o(1))$, we recover a regime that is similar to that obtained by \cite*{wel} by a direct application of Theorem~\ref{thm2} and Theorem~\ref{thm3}.
\begin{corollary}\label{corW}
Suppose that for any $n$, $\sigma_n$ is a conjugacy invariant random permutation of size $n$. In addition,   assume that $\frac{\#_1(\sigma)\log(n)}{\theta n}\xrightarrow[n\to\infty]{\mathbb{P}} 1$  and that 
$$\frac{\#(\sigma_n)- \#_1(\sigma_n)}{{n^{\frac12}}}  \xrightarrow[n\to\infty]{\mathbb{P}} 0.$$

We have then,
\begin{itemize}
\item
$$\frac{\ell(\lambda(\sigma_n))}{\sqrt{n}} \xrightarrow[n\to\infty]{\mathbb{P}}2,$$

\item 
$$\frac{\lambda_1(\sigma_n)\log(n)}{\theta n} \xrightarrow[n\to\infty]{\mathbb{P}}1,$$
\item 
$$\sup_{s\in \mathbb{R}} \left|\frac{1}{\sqrt{2n}}L_{\lambda(\sigma_n)}\left({s}{\sqrt{2n}}\right)-\Omega(s)\right|\xrightarrow[n\to\infty]{\mathbb{P}}0.$$
\end{itemize}
\end{corollary}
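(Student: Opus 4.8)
The plan is to derive all three assertions from Theorems~\ref{thm2} and~\ref{thm3} after reducing the hypotheses. Since $\#_1(\sigma_n)=\tfrac{\theta n}{\log n}(1+o_{\mathbb P}(1))$ we have $\#_1(\sigma_n)/n\xrightarrow[n\to\infty]{\mathbb P}0$, so $n-\#_1(\sigma_n)=n(1+o_{\mathbb P}(1))\xrightarrow[n\to\infty]{\mathbb P}\infty$ and $\sqrt{1-\#_1(\sigma_n)/n}\xrightarrow[n\to\infty]{\mathbb P}1$; moreover, from $\#(\sigma_n)-\#_1(\sigma_n)=o_{\mathbb P}(\sqrt n)$ and $\min\!\big(n-\#_1(\sigma_n^2),\,\#(\sigma_n)-\#_1(\sigma_n)\big)\le \#(\sigma_n)-\#_1(\sigma_n)$ this minimum is $o_{\mathbb P}(\sqrt n)$, hence both condition~\eqref{cond3.4} of Theorem~\ref{thm3} and the hypothesis of Theorem~\ref{thm2} are satisfied. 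The first assertion follows at once from~\eqref{EQ3.4} upon multiplying by $\sqrt{(n-\#_1(\sigma_n))/n}\xrightarrow[n\to\infty]{\mathbb P}1$. The third assertion is Theorem~\ref{thm2} in the regime $\#_1(\sigma_n)=o_{\mathbb P}(n)$: since $\sqrt{1-\#_1(\sigma_n)/n}\xrightarrow[n\to\infty]{\mathbb P}1$, the limiting profile $\sqrt{1-\#_1(\sigma_n)/n}\,\Omega\big(s\sqrt{1-\#_1(\sigma_n)/n}\big)$ can be replaced by $\Omega(s)$, the error being $o_{\mathbb P}(1)$ uniformly in $s$ once one treats separately the bulk $|s|=O(1)$ (using the uniform continuity of $\Omega$), the macroscopic first-row range (where the rescaled profile exceeds $|s|=\Omega(s)$ by only $o_{\mathbb P}(1)$), and the rest (where the rescaled profile equals $|s|=\Omega(s)$ identically).

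For the second assertion the lower bound is elementary: the $\#_1(\sigma_n)$ fixed points of $\sigma_n$, as a set of positions, form an increasing subsequence, so $\lambda_1(\sigma_n)\ge\#_1(\sigma_n)$ and $\tfrac{\lambda_1(\sigma_n)\log n}{\theta n}\ge\tfrac{\#_1(\sigma_n)\log n}{\theta n}\xrightarrow[n\to\infty]{\mathbb P}1$. For the upper bound, let $\hat\sigma_n$ be the permutation pattern that $\sigma_n$ induces on its $m:=n-\#_1(\sigma_n)$ non-fixed points; it is conjugacy invariant, has no fixed point, and $\#(\hat\sigma_n)=\#(\sigma_n)-\#_1(\sigma_n)=o_{\mathbb P}(\sqrt m)$. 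Splitting an increasing subsequence of $\sigma_n$ into its fixed-point part and its complement gives $\lambda_1(\sigma_n)\le\#_1(\sigma_n)+\lambda_1(\hat\sigma_n)$, and since a conjugacy invariant permutation on $m$ letters with $o(\sqrt m)$ cycles has longest increasing subsequence $O_{\mathbb P}(\sqrt m)$ (this is contained in the results of~\citep{2018arXiv180505253S}, and also in the estimates developed for Theorem~\ref{thm3}), we get $\lambda_1(\hat\sigma_n)=O_{\mathbb P}(\sqrt n)=o_{\mathbb P}(n/\log n)$; combining the two bounds yields $\lambda_1(\sigma_n)=\#_1(\sigma_n)(1+o_{\mathbb P}(1))$, which is the second assertion.

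The one step that is more than bookkeeping is this last input. Theorem~\ref{thm2} describes the diagram only at scale $\sqrt n$ and is blind to the first row, which here is macroscopic, of length $\asymp n/\log n\gg\sqrt n$; and Theorem~\ref{thm3}, under the present weak hypothesis on the number of cycles, controls the longest \emph{decreasing} subsequence but not the longest increasing one. Hence neither theorem by itself bounds $\lambda_1(\sigma_n)$ from above: one genuinely has to know that, after the macroscopic family of fixed points is removed, the remaining $\sim n$ letters — which carry only $o(\sqrt n)$ cycles — cannot support a second increasing pattern of length $\gg\sqrt n$. That $\sqrt n$-scale bound on the longest increasing subsequence of few-cycle conjugacy invariant permutations is the substantive ingredient; the rest of the corollary is a routine manipulation of Theorems~\ref{thm2} and~\ref{thm3}.
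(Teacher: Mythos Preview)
Your proof is correct and matches the paper's intended argument: the paper presents Corollary~\ref{corW} as an immediate consequence of Theorems~\ref{thm2} and~\ref{thm3} and gives no separate proof. You are right that the second assertion on $\lambda_1(\sigma_n)$ does not literally follow from those two theorems alone; the sandwich $\#_1(\sigma_n)\le\lambda_1(\sigma_n)\le\#_1(\sigma_n)+\lambda_1(\tau_n)$ you use is exactly \eqref{leq1} of Lemma~\ref{l15}, and the input $\lambda_1(\tau_n)=O_{\mathbb P}(\sqrt n)$ from \citep{2018arXiv180505253S} is precisely the extra ingredient the paper itself invokes in the proof of Lemma~\ref{main_lemma}, so you have filled in the step the paper glossed over rather than taken a different route.
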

 We conjecture that $\lambda_2$ have the same behaviours as in \citep{wel}  i.e. 
$$\frac{\lambda_2(\sigma_n)}{\sqrt{n}} \xrightarrow[n\to\infty]{\mathbb{P}}2,$$
but we can prove only the following.
\begin{lemma} \label{main_lemma}
Under the same hypothesis than Corollary \ref{corW},
for any $\varepsilon>0$
$$\mathbb{P} \left(2-\varepsilon<\frac{ \lambda_2(\sigma_n)}{\sqrt{n}} <4+\varepsilon \right)\xrightarrow[n\to\infty]{}1$$
\end{lemma}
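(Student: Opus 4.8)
\emph{Sketch of proof.} Write $\#_1=\#_1(\sigma_n)$, let $\mathrm{Fix}=\{i:\sigma_n(i)=i\}$, set $m=n-\#_1$, and let $\pi=\pi_n\in\mathfrak S_m$ be the permutation obtained by restricting $\sigma_n$ to $\{1,\dots,n\}\setminus\mathrm{Fix}$ and relabelling by the unique increasing bijection onto $\{1,\dots,m\}$. Two elementary facts will be used. First, the elements of $\mathrm{Fix}$ form, by themselves, an increasing subsequence, so $\lambda_1(\sigma_n)\geq\#_1$. Second, any increasing subsequence of $\sigma_n$ splits as the disjoint union of the fixed points it uses (at most $\#_1$) and a set of non-fixed points which, after relabelling, is an increasing subsequence of $\pi$; hence $\max_{s\in\mathfrak I_j(\sigma_n)}|s|\leq\#_1+\max_{s\in\mathfrak I_j(\pi)}|s|$ for all $j\geq1$, i.e. by Proposition~\ref{RSKLEMMA} $\sum_{i=1}^j\lambda_i(\sigma_n)\leq\#_1+\sum_{i=1}^j\lambda_i(\pi)$. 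Taking $j=2$ and subtracting $\lambda_1(\sigma_n)\geq\#_1$ gives the deterministic bound $\lambda_2(\sigma_n)\leq\lambda_1(\pi)+\lambda_2(\pi)\leq2\lambda_1(\pi)$.

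For the upper bound in the lemma it thus suffices to show $\lambda_1(\pi)\leq(2+o(1))\sqrt m$ with probability tending to $1$. Conditionally on the cycle type of $\sigma_n$, $\pi$ is a conjugacy invariant random permutation of $\{1,\dots,m\}$ with no fixed point and with $\#(\pi)=\#(\sigma_n)-\#_1(\sigma_n)$ cycles. The hypotheses of Corollary~\ref{corW} give $\#_1=(\theta n/\log n)(1+o(1))$, so $m=n(1+o(1))\to\infty$, and $\#(\pi)=o(\sqrt n)=o(\sqrt m)$; in particular $\min(m-\#_1(\pi^2)\,;\,\#(\pi)-\#_1(\pi))/m\leq\#(\pi)/m\xrightarrow[n\to\infty]{\mathbb P}0$, so the results of Theorem~\ref{thm2} (and of the prequel \citep{2018arXiv180505253S}) apply to $\pi$. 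Applying to $\pi$ the longest increasing subsequence estimate for a conjugacy invariant permutation of $m$ elements with no fixed point and $o(\sqrt m)$ cycles — part of that analysis, and the same input that underlies Corollary~\ref{corW} — gives $\lambda_1(\pi)\leq(2+o(1))\sqrt m$ with probability tending to $1$, whence $\lambda_2(\sigma_n)\leq2\lambda_1(\pi)\leq(4+o(1))\sqrt{n-\#_1}\leq(4+o(1))\sqrt n$, i.e. $\mathbb P(\lambda_2(\sigma_n)/\sqrt n<4+\varepsilon)\to1$ for every $\varepsilon>0$.

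For the lower bound I would use the shape of $\sigma_n$ itself together with an averaging argument. By the third item of Corollary~\ref{corW} the rescaled profile of $\lambda(\sigma_n)$ converges uniformly in probability to $\Omega$; the standard passage from profile convergence to convergence of partial row sums then gives, for each fixed $v>0$,
\begin{equation*}
\frac1n\sum_{1\leq i\leq\lceil v\sqrt n\,\rceil}\lambda_i(\sigma_n)\ \xrightarrow[n\to\infty]{\mathbb P}\ \mathcal A(v),
\end{equation*}
where $\mathcal A$ is the corresponding functional of the Vershik--Kerov--Logan--Shepp curve; $\mathcal A$ is continuous and $\mathcal A(v)/v\to2$ as $v\to0^+$, the constant $2$ being the scaled length of the longest row of the limit shape. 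Since $\lambda_2(\sigma_n)\geq\lambda_i(\sigma_n)$ for all $i\geq2$, we have $\lambda_2(\sigma_n)\geq\frac1{\lceil v\sqrt n\,\rceil-1}\bigl(\sum_{1\leq i\leq\lceil v\sqrt n\,\rceil}\lambda_i(\sigma_n)-\lambda_1(\sigma_n)\bigr)$; inserting the display and using $\lambda_1(\sigma_n)=(\theta n/\log n)(1+o(1))=o(n)$ (second item of Corollary~\ref{corW}) gives, for each fixed $v$, $\lambda_2(\sigma_n)\geq(\mathcal A(v)/v-o(1))\sqrt n$ with probability tending to $1$. Given $\varepsilon>0$, choosing $v$ small enough that $\mathcal A(v)/v>2-\varepsilon/2$ yields $\mathbb P(\lambda_2(\sigma_n)/\sqrt n>2-\varepsilon)\to1$, and intersecting with the event of the previous paragraph finishes the proof.

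The main obstacle is the upper bound: uniform convergence of the rescaled profile (Theorem~\ref{thm2}) does \emph{not} by itself control $\lambda_1(\pi)$, since a bounded number of macroscopically long rows are invisible at that scale, so one genuinely needs the sharper edge estimate $\lambda_1(\pi)\leq(2+o(1))\sqrt m$ for conjugacy invariant permutations with $o(\sqrt m)$ cycles. A secondary, routine point — treated as elsewhere in the paper by conditioning first on the cycle type of $\sigma_n$ — is that this estimate and Theorem~\ref{thm2} are applied to $\pi$, whose size $m=n-\#_1(\sigma_n)$ is random; and one should check that the convergence of the truncated sums used in the lower bound is insensitive to the single macroscopic first row of $\sigma_n$, which is why $\lambda_1(\sigma_n)$ is subtracted off explicitly.
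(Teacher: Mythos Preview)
Your argument is correct. For the upper bound you proceed exactly as the paper does: the inequality $\lambda_2(\sigma_n)\le\lambda_1(\tau_n)+\lambda_2(\tau_n)\le 2\lambda_1(\tau_n)$ (your derivation from Proposition~\ref{RSKLEMMA} is the same as the paper's use of \eqref{leq3} with $m=2$), followed by conjugacy invariance of $\tau_n=\pi$ conditionally on the fixed-point set (Lemma~\ref{lem11}) and the edge estimate $\lambda_1(\tau_n)/\sqrt{n-\#_1}\to 2$ from \citep{2018arXiv180505253S} under the hypothesis $\#(\tau_n)=o(\sqrt{n})$. Your remark that profile convergence alone cannot control $\lambda_1(\tau_n)$ and that the sharper input is genuinely needed is well taken.

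For the lower bound you go further than the paper: its proof of Lemma~\ref{main_lemma} records only the upper bound and says nothing about $\lambda_2(\sigma_n)\ge(2-\varepsilon)\sqrt n$. Your averaging argument --- combining the profile convergence of Corollary~\ref{corW} (third item), the bound $\lambda_1(\sigma_n)=o(n)$ (second item), and the monotonicity $\lambda_2\ge\lambda_i$ for $i\ge2$ --- is a clean and valid way to extract this half of the statement. One could shortcut it slightly by working directly with $\tau_n$: the profile convergence of $\tau_n$ to $\Omega$ gives $\lambda_{\lfloor v\sqrt n\rfloor}(\tau_n)/\sqrt n\to$ a value tending to $2$ as $v\to0$, and then the first inequality in \eqref{leq2} yields $(\lfloor v\sqrt n\rfloor-1)\lambda_2(\sigma_n)\ge\sum_{j=2}^{\lfloor v\sqrt n\rfloor}\lambda_j(\sigma_n)\ge\sum_{j=1}^{\lfloor v\sqrt n\rfloor-1}\lambda_j(\tau_n)+\#_1-\lambda_1(\sigma_n)$, which avoids having to argue separately that the single macroscopic row of $\sigma_n$ does not spoil the passage from profile to partial sums. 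Either route is fine.
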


Recently, \cite{borga2023powerlaw} and \cite{dubach2023locally} gave a non trivial  regimes where the longest increasing subsequence  $\lambda_1(\sigma_n)$ behave like $n^{\beta}$.
In our case similar regimes appear  for the same reasons as before  when $$\frac{\#_1(\sigma)}{n^\beta}\xrightarrow[n\to\infty]{\mathbb{P}} c$$ and 
$$\frac{\#(\sigma)-\#_1 (\sigma)}{n^\beta}\xrightarrow[n\to\infty]{\mathbb{P}} 0.$$

For general conjugacy invariant permutations, we do conjecture that a university phenomenon  holds for any conjugacy invariant permutations.  In particular, we conjecture that

\begin{conjecture}\label{conj}
Assume that for any $n$, $\sigma_n$ is a conjugacy invariant random permutation of size $n$.
Then
\begin{align}
\sup_{s\in \mathbb{R}} \left|\frac{1}{2\sqrt{n}}L_{\lambda(\sigma_n)}\left({2s\sqrt{n}}\right)-\sqrt{1-\frac{\#_1(\sigma_n)}{n}}\Omega\left(s \sqrt{1-\frac{\#_1(\sigma_n)}{n}} \right)\right|\xrightarrow[n\to\infty]{\mathbb{P}}0.
\end{align}
\end{conjecture}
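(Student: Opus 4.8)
The plan is to follow the two-sided strategy behind Theorem~\ref{thm2}: reduce the convergence of the full height function $L_{\lambda(\sigma_n)}$ to the convergence of the longest unions of monotone subsequences via Greene's theorem (Proposition~\ref{RSKLEMMA}), and then establish matching upper and lower bounds that, crucially, depend on the cycle type only through the number of fixed points. Throughout I would condition on the (possibly random) cycle type of $\sigma_n$ and write $m=n-\#_1(\sigma_n)$; since the target profile $s\mapsto\sqrt{1-\#_1/n}\,\Omega(s\sqrt{1-\#_1/n})$ depends continuously on $\#_1/n$, it suffices to prove the conditional statement for an arbitrary deterministic sequence of cycle types and to control the partial sums $\sum_{k\le i}\lambda_k=\max_{s\in\mathfrak I_i}|s|$ and $\sum_{k\le i}\lambda'_k=\max_{s\in\mathfrak D_i}|s|$ for $i$ of order $\sqrt n$; uniform-in-$s$ convergence then follows from the $1$-Lipschitz property and monotonicity of $L_\lambda$ by a Dini-type argument.

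For the upper bound I would use a first-moment computation. By conjugacy invariance I realize $\sigma_n$ as $\rho\, c\, \rho^{-1}$ with $c$ a fixed permutation of the prescribed type and $\rho$ uniform, so that for a fixed set $S$ of positions the probability that $S$ carries no decreasing subsequence of length $i+1$ (equivalently $S\in\mathfrak I_i$, by Dilworth) can be estimated from the joint law of the relabelling. The point is that this probability is governed by the pairwise and higher correlations of the points of $S$ under $c$, and for tuples in general position these match the uniform case that produces the Vershik–Kerov–Logan–Shepp bound; the $\#_1$ fixed points must be peeled off first, since they deterministically form an increasing run of length $\#_1$ and are responsible for the rescaling $\sqrt{1-\#_1/n}$. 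Markov's inequality applied to the number of admissible $S$ of size exceeding the target then yields, uniformly over cycle types with $m$ non-fixed points, the desired asymptotic upper bound on each partial sum.

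The lower bound is the main obstacle. Here one must \emph{exhibit} unions of monotone subsequences whose total lengths reach the target, and in particular one must recover the curved part $\Omega(s)$ for $|s|<1$, not merely the linear tails. The fixed points supply an increasing subsequence of length $\#_1$ for free, so the real task is to show that the $m$ non-fixed points contribute a Plancherel-type shape at scale $\sqrt m$ \emph{irrespective of how they are partitioned into cycles}. Theorem~\ref{thm2} handles the two boundary situations — almost all non-fixed points in $2$-cycles (the involution regime of \cite{Mliot2011KerovsCL}) and few but long cycles (the microscopic-cycle regime of \cite{2018arXiv180505253S}) — but in the genuinely new regime, where $\Theta(n)$ points lie in cycles of length $\ge 3$ \emph{and} there are $\Theta(n)$ such cycles, neither the involution symmetry nor the single-long-cycle relabelling argument is available, and coupling different cycle types by transpositions is too costly (it would require $\Theta(n)$ elementary moves and hence perturb the shape at the macroscopic scale $\sqrt n$).

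To attack this I would aim for a robust comparison lemma asserting that, given $\#_1$ and $m\to\infty$, the RS shape is asymptotically insensitive to the partition of the $m$ non-fixed points into cycles: concretely, I would try to bound from below the longest $i$-fold increasing subsequence of the non-fixed part by a quantity depending only on $m$ and $i$, via a second-moment concentration estimate showing that the correlations created by the cycle structure are negligible except on a set of pairs of size $o(m)$. A complementary route is an interpolation that decomposes the non-fixed points into a block of artificial $2$-cycles and a block of long cycles, applies the two boundary results of Theorem~\ref{thm2} to the blocks, and patches the resulting subsequences; the difficulty is that patching preserves the correct curved profile only if the two blocks are placed in general position by the random relabelling, which again reduces to a concentration statement for longest monotone subsequences under an arbitrary conjugacy-invariant law. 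I expect this uniform lower bound on the curved part to be the crux, and the absence of a concentration tool valid for all cycle types simultaneously is precisely why the statement is, at present, only a conjecture.
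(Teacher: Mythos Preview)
The statement you are addressing is not proved in the paper: it is explicitly labeled a \emph{conjecture} and remains open. The paper's only contribution toward it is the remark that Conjecture~\ref{conj} is equivalent to Conjecture~\ref{conj1}, i.e.\ that one may assume $\#_1(\sigma_n)=0$ almost surely. That reduction is exactly the ``peel off the fixed points'' step you describe: write $\sigma_n=(\mathrm{fix}(\sigma_n),\tau_n)$, use Lemma~\ref{lem11} to see that $\tau_n$ is again conjugacy invariant conditionally on the fixed-point set, and apply Lemmas~\ref{34} and~\ref{l15} to transfer the shape convergence from $\tau_n$ to $\sigma_n$. Everything beyond that reduction is genuinely open, and you correctly diagnose the lower bound in the intermediate cycle regime (many cycles of length $\geq 3$) as the crux.

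There is, however, a second gap you gloss over. Your upper-bound sketch via a first-moment count asserts that for a conjugacy-invariant permutation the probability that a fixed set $S$ belongs to $\mathfrak I_i$ ``matches the uniform case for tuples in general position''. This is not established: the joint law of $(\sigma(j))_{j\in S}$ under $\rho c\rho^{-1}$ depends on the full cycle structure of $c$, not merely on $\#_1$, and the deviation from the uniform law is not known to be summably small over all $S$ of the relevant size. In particular, even the one-row statement $\lambda_1(\sigma_n)\le (2+o(1))\sqrt{n}$ for an arbitrary fixed-point-free conjugacy-invariant $\sigma_n$ is part of the conjecture, not a known input. So both halves of your two-sided strategy are incomplete, and the paper does not supply the missing ingredient for either; what it does supply is precisely the reduction to $\#_1=0$ that you identified as the easy preliminary step.
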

Conjecture \ref{conj} is equivalent to the following conjecture. 

\begin{conjecture} \label{conj1}
Assume that for any $n$, $\sigma_n$ is a conjugacy invariant random permutation of size $n$. Suppose that 
\begin{align}\label{H1}
    \#_1(\sigma_n)\overset{a.s}=0.
\end{align}
Then
\begin{align} \label{VKLGS}
\sup_{s\in \mathbb{R}} \left|\frac{1}{2\sqrt{n}}L_{\lambda(\sigma_n)}\left({2s\sqrt{n}}\right)-\Omega\left(s\right)\right|\xrightarrow[n\to\infty]{\mathbb{P}}0.
\end{align}
\end{conjecture}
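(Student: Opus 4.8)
The plan is to prove Conjecture \ref{conj1} by reduction to the already-established Theorem \ref{thm2}, exploiting the fixed-point-free hypothesis \eqref{H1}. Under $\#_1(\sigma_n)=0$ almost surely, the target \eqref{VKLGS} is exactly the conclusion of Theorem \ref{thm2} with $\#_1(\sigma_n)/n=0$, so it suffices to verify that the hypothesis of Theorem \ref{thm2}, namely
\begin{equation*}
\frac{\min\left(n-\#_1(\sigma_n^2)\,;\,\#(\sigma_n)-\#_1(\sigma_n)\right)}{n}\xrightarrow[n\to\infty]{\mathbb{P}}0,
\end{equation*}
holds for \emph{every} conjugacy invariant $\sigma_n$ with no fixed points. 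Since $\#_1(\sigma_n)=0$, the second term in the minimum is simply $\#(\sigma_n)/n$, which is the normalized total number of cycles. So the whole difficulty reduces to showing that a fixed-point-free conjugacy invariant permutation must have, with high probability, \emph{either} $o(n)$ cycles \emph{or} $n-\#_1(\sigma_n^2)=o(n)$ (equivalently, the squared permutation has macroscopically many fixed points, which forces most cycles of $\sigma_n$ to have length $2$).

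First I would translate the two quantities into cycle-type statistics. Writing $c_k(\sigma_n)$ for the number of $k$-cycles, we have $\#(\sigma_n)=\sum_k c_k$, and $\#_1(\sigma_n^2)$ counts the fixed points of $\sigma_n^2$, which are precisely the points lying in $1$-cycles or $2$-cycles of $\sigma_n$; thus $\#_1(\sigma_n^2)=c_1+2c_2$ and, using $\#_1=c_1=0$, we get $n-\#_1(\sigma_n^2)=n-2c_2=\sum_{k\geq 3}k\,c_k$. The key combinatorial observation is then an elementary deterministic inequality: for any fixed-point-free permutation,
\begin{equation*}
\min\left(\sum_{k\geq 3}k\,c_k\,;\,\sum_{k\geq 2}c_k\right)\leq \frac{n}{?},
\end{equation*}
and the question is whether the right normalization by $n$ tends to zero. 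This is where I expect the obstruction: \emph{no such deterministic bound can hold}. A permutation consisting entirely of $3$-cycles has $\#(\sigma_n)=n/3$ cycles (so $\#(\sigma_n)-\#_1=n/3$, which is $\Theta(n)$) and $n-\#_1(\sigma_n^2)=n$ (also $\Theta(n)$), so the minimum is $\Theta(n)$ and does \emph{not} vanish. Consequently the hypothesis of Theorem \ref{thm2} genuinely fails for such deterministic conjugacy classes, and a direct reduction cannot prove Conjecture \ref{conj1} in full generality.

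The honest conclusion is therefore that Conjecture \ref{conj1} is not a consequence of Theorem \ref{thm2} and must remain a conjecture; the reduction strategy only recovers the special cases where the cycle structure is concentrated on $1$- and $2$-cycles (already covered) or where the number of cycles is $o(n)$ (the long-cycle regime of \citep{2018arXiv180505253S}). To make genuine progress toward \eqref{VKLGS} in the hard intermediate regime—say, $\sigma_n$ composed of $\Theta(n)$ cycles all of bounded length $\geq 3$—one would need a fundamentally different input than Theorem \ref{thm2}: most plausibly a concentration/interlacing argument controlling $L_{\lambda(\sigma_n)}$ directly through Proposition \ref{RSKLEMMA}. The approach I would pursue is to bound the length of monotone subsequences combinatorially, using conjugacy invariance to replace $\sigma_n$ in distribution by $\rho\sigma_n\rho^{-1}$ for a uniform independent $\rho$; the randomized conjugate behaves, for the purposes of longest increasing/decreasing subsequences, like a sufficiently ``generic'' permutation of the same cycle type, and one would try to show its RS shape concentrates on $\Omega$. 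The hard part will be precisely this last step: establishing that for an arbitrary fixed cycle type with all cycles short, the longest increasing subsequence of a uniformly random conjugate still grows like $2\sqrt{n}$ with the full VKLS profile, which is exactly the content of the universality conjecture and is not reducible to the fixed-point-heavy machinery developed for Theorem \ref{thm2}.
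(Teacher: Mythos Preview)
Your analysis is correct, and in fact it aligns with the paper: the statement is labeled \emph{Conjecture} precisely because the paper does not prove it. There is no ``paper's own proof'' to compare against. Your $3$-cycle example is exactly the right obstruction showing that the hypothesis of Theorem~\ref{thm2} can fail for fixed-point-free conjugacy invariant permutations, so a direct reduction is impossible; this is why the paper leaves the statement open and merely notes its equivalence with Conjecture~\ref{conj}. Your closing remarks about what a genuine attack would require (handling cycle types with $\Theta(n)$ short cycles of length $\geq 3$) accurately describe the gap.
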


\textbf{Acknowledgements:} The author want to thank 
Guillaume Chapuy, Victor Dubach, Valentin Féray,   Baptiste Louf, Pierre-Loïc Méliot and Harriet Walsh
for very useful discussions.   This work is supported by Labex CIMI (ANR-11-LABX-0040).

\section{Combinatorial control}


The objective of this section is to establish two combinatorial lemmas, namely Lemma \ref{34} and Lemma \ref{l15}, which provide control over the shape of RS.

\begin{lemma} \label{34}
Let $n,m\in \mathbb{N}^*$, $\lambda=(\lambda_i)_{i\geq1}\in\mathbb{Y}_n$, $\mu=(\mu_i)_{i\geq1}\in\mathbb{Y}_m$. Then,  
\begin{align}\label{disVC}
\sup_{s\in\mathbb{R}}\left|L_\lambda(s)-L_\mu(s)\right| \leq \min_{l\geq0} 2\sqrt{ \max_{m\geq l+1} \left|\sum_{k=l+1}^m( \lambda_k-\mu_k)\right|}+\sqrt{2}l.
\end{align}
\end{lemma}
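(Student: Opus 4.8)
The plan is to translate the statement into one about the ``particle'' description of the two diagrams. Associate to a partition $\nu$ the strictly decreasing integer sequence $(\nu_i-i)_{i\ge1}$ (with $\nu_i-i=-i$ for $i>\ell(\nu)$, and consecutive gaps at least $1$), and set $N_\nu(t):=\#\{i\ge1:\ \nu_i-i\ge t\}$ for $t\in\mathbb Z$. Unwinding the definition of $L_\nu$ (rotation by $\tfrac{3\pi}{4}$, then extension by $|\cdot|$), one checks that $L_\nu$ is $1$-Lipschitz and piecewise linear, with break points on a fixed lattice, and that at its $t$-th break point the difference $L_\lambda-L_\mu$ equals $\sqrt2\,(N_\lambda(t)-N_\mu(t))$; since a piecewise linear function attains its extrema at break points, this yields $\sup_{s}|L_\lambda(s)-L_\mu(s)|=\sqrt2\,\sup_{t\in\mathbb Z}|N_\lambda(t)-N_\mu(t)|$. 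Hence it is enough to prove that for every $t\in\mathbb Z$ and every integer $l\ge0$,
\[
\bigl|N_\lambda(t)-N_\mu(t)\bigr|\ \le\ l+\sqrt{2B_l},\qquad B_l:=\max_{m\ge l+1}\Bigl|\sum_{k=l+1}^{m}(\lambda_k-\mu_k)\Bigr|,
\]
since multiplying by $\sqrt2$ and minimising over $l$ then produces exactly $\min_{l\ge0}\bigl(2\sqrt{B_l}+\sqrt2\,l\bigr)$.

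For the displayed inequality I would argue as follows. Fix $t,l$ and assume $N_\lambda(t)\ge N_\mu(t)$ (the other case is symmetric, as $B_l$ is). The first $l$ indices contribute at most $l$ to each count, so, writing $q:=\max\{N_\mu(t),l\}$ and $D:=N_\lambda(t)-q$, one has $N_\lambda(t)-N_\mu(t)\le D+l$, and it suffices to bound $D$ (we may assume $D\ge1$). Because the sequences $(\lambda_i-i)_i$ and $(\mu_i-i)_i$ are strictly decreasing, the index sets $\{i:\lambda_i-i\ge t\}$ and $\{i:\mu_i-i\ge t\}$ are the initial segments $\{1,\dots,N_\lambda(t)\}$ and $\{1,\dots,N_\mu(t)\}$; hence for $j=1,\dots,D$ the index $q+j$ lies in the first but not the second, i.e. $\lambda_{q+j}-(q+j)\ge t$ and $\mu_{q+j}-(q+j)\le t-1$. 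Propagating these bounds with the gap condition gives $\lambda_{q+j}-(q+j)\ge t+(D-j)$ and $\mu_{q+j}-(q+j)\le t-j$, so $\lambda_{q+j}-\mu_{q+j}\ge D$ for \emph{every} $j$, and therefore
\[
\sum_{k=q+1}^{q+D}(\lambda_k-\mu_k)\ \ge\ D^2 .
\]
This is the crucial point: the uniform lower bound $\lambda_{q+j}-\mu_{q+j}\ge D$ (which rests on the gap condition together with integrality of $\lambda_{q+j}-\mu_{q+j}$) is what turns a linear quantity, the partial sum, into a quadratic one, so the resulting bound on $D$ ends up with a square root. Finally, since $q\ge l$, the left-hand side equals $\sum_{k=l+1}^{q+D}(\lambda_k-\mu_k)-\sum_{k=l+1}^{q}(\lambda_k-\mu_k)$, each term having absolute value at most $B_l$ (the second being empty, hence $0$, when $q=l$), so $D^2\le 2B_l$, i.e. $D\le\sqrt{2B_l}$.

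Assembling these facts gives $|N_\lambda(t)-N_\mu(t)|\le l+\sqrt{2B_l}$ for all $t$ and $l$, and \eqref{disVC} follows by the reduction of the first paragraph. The routine parts are the bookkeeping of the second paragraph — which indices land in which index set, the truncation constant $l$, the reduction to $q=\max\{N_\mu(t),l\}$ — and checking the identity $\sup_s|L_\lambda(s)-L_\mu(s)|=\sqrt2\,\sup_t|N_\lambda(t)-N_\mu(t)|$ against the precise normalisation of $L$. The genuinely load-bearing step is the estimate $\sum_{k=q+1}^{q+D}(\lambda_k-\mu_k)\ge D^2$; everything else is elementary.
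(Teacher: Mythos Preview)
Your argument is correct and is essentially the paper's proof in different clothing: your counting function $N_\nu(t)$ is precisely the $u$-coordinate $\alpha$ of the point on $\mathscr{C}_\nu$ with content $-t$, so your identity $\sup_s|L_\lambda-L_\mu|=\sqrt2\sup_t|N_\lambda(t)-N_\mu(t)|$ is exactly the paper's reduction to the lattice $\frac{\sqrt2}{2}\mathbb Z$ together with \eqref{eq15}, and your key estimate $\sum_{k=q+1}^{q+D}(\lambda_k-\mu_k)\ge D^2$ is the paper's bound $\sum(\mu_m-\lambda_m)\ge k_i(k_i-l)\ge(k_i-l)^2$, with your pre-truncation $q=\max\{N_\mu(t),l\}$ absorbing the $l$ that the paper subtracts afterwards. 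The Maya-diagram packaging is a tidy way to organise the same computation.
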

Lemma \ref{34} can be seen as a generalization of 
\citep[Lemma~34]{2018arXiv180505253S}.
Before giving its proof  we introduce some notations. 
Let $(O,\vec{x},\vec{y})$ be  the canonical frame of the Euclidean plane and $\vec{u}:=\frac{\sqrt{2}}{2}(\vec{x}+\vec{y})$, $\vec{v}:=\frac{\sqrt{2}}{2}(\vec{y}-\vec{x})$. Let $\lambda \in \mathbb{Y}_n$. Using the convention $\lambda_0=\infty$, let $\mathscr{C}_\lambda$ be the curve obtained by connecting  the points with coordinates   $(0,\lambda_0),(0,\lambda_1), (1,\lambda_1),(1,\lambda_2),\dots,$ $ (i,\lambda_{i}),(i,\lambda_{i+1}),\dots$ in the axes system $(O,\overrightarrow{u},\overrightarrow{v})$ as in Figure \ref{figL31}.
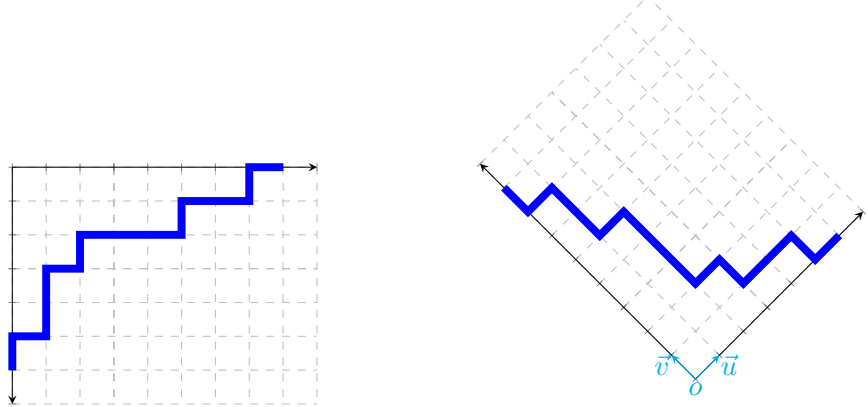
\begin{figure}[H]
\centering
\begin{tikzpicture}[/pgfplots/y=0.45cm, /pgfplots/x=0.45cm]
      \begin{axis}[
    axis x line=center,
    axis y line=center,
    xmin=0, xmax=7,
    ymin=0, ymax=9, clip=false,
    ytick={0},
	xtick={0},
    minor xtick={0,1,2,3,3,4,5,6,7,8,9},
    minor ytick={0,1,2,3,3,4,5,6,7,8,9},
    grid=both,
    legend pos=north west,
    ymajorgrids=false,
    xmajorgrids=false, anchor=origin,
    grid style=dashed    , rotate around={-90:(rel axis cs:0,0)},
]

\addplot[
    color=blue,
        line width=3pt,
    ]
    coordinates {
    (0,8)(0,7)(1,7)(1,5)(2,5)(2,2)(3,2)(3,1)(5,1)(5,0)(6,0)
    };
\end{axis}
    \end{tikzpicture}\;  \;  \; \; \;  \; \; \;  
    \begin{tikzpicture}[/pgfplots/y=0.45cm, /pgfplots/x=0.45cm]

 \begin{axis}[
    axis x line=center,
    axis y line=center,
    xmin=0, xmax=7,
    ymin=0, ymax=9, clip=false,
    ytick={0},
	xtick={0},
    minor xtick={0,1,2,3,3,4,5,6,7,8,9},
    minor ytick={0,1,2,3,3,4,5,6,7,8,9},
    grid=both,
    legend pos=north west,
    ymajorgrids=false,
    xmajorgrids=false, 
    grid style=dashed    , rotate around={45:(rel axis cs:0,0)},
]

\addplot[
    color=blue,
        line width=3pt,
    ]
    coordinates {
    (0,8)(0,7)(1,7)(1,5)(2,5)(2,2)(3,2)(3,1)(5,1)(5,0)(6,0)
    };
   \addplot[cyan,
        quiver={u=\thisrow{u},v=\thisrow{v}},
        -stealth]
    table
    {
    x y u v
    0 0 1 0
    0 0 0 1
    };
        \node[cyan] at (axis cs: -0.4,1) {$\vec{v}$};
 \node[cyan] at (axis cs: 1,-0.4) {$\vec{u}$};
                 \node[cyan] at (axis cs: -0.2,-0.2) {$o$};
\end{axis}
    \end{tikzpicture}
    \caption{  $\mathscr{C}_\lambda$ for $\lambda=(7,5,2,1,1,\underline{0})$}
     \label{figL31}
\end{figure}
 By construction $\mathscr{C}_\lambda$ is the curve of $L_\lambda$. This yields the following two lemmas.
\begin{lemma} \citep[Lemma~33]{2018arXiv180505253S} \label{lemmainq}
Let  $\alpha,\beta \in \mathbb{N}$ and $A$ the point  such that  $\overrightarrow{OA}=\alpha\vec{u}+\beta\vec{v}$. If $A\in \mathscr{C}_\lambda$, then
\begin{equation}\label{la1}
\lambda_{\alpha+1}\leq \beta\leq \lambda_\alpha.
\end{equation}
Moreover, for all $i\in\mathbb{Z}$,
\begin{align}\label{eq15}
\frac{\sqrt{2}}{2} L_\lambda\left(\frac{\sqrt{2}}{2}i\right)\pm\frac{i}{2}\in \mathbb{N}.
\end{align}
\end{lemma}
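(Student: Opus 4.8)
The plan is to read both assertions directly off the piecewise-linear description of the staircase curve $\mathscr{C}_\lambda$ in the frame $(O,\vec u,\vec v)$, after a single change of coordinates. Throughout, write $(a,b)$ for the coordinates of a point $P$ in this frame, i.e. $\overrightarrow{OP}=a\vec u+b\vec v$. By the very construction recalled before the lemma, the edges of $\mathscr{C}_\lambda$ (with the convention $\lambda_0=\infty$) are the vertical segments $\{a=\alpha,\ \lambda_{\alpha+1}\le b\le\lambda_\alpha\}$ and the horizontal segments $\{b=\lambda_k,\ k-1\le a\le k\}$, and its corners are the lattice points $(\alpha,\lambda_\alpha)$ and $(\alpha,\lambda_{\alpha+1})$; moreover the two infinite rays (at $a=0$ and at $b=0$) are exactly the extension of $L_\lambda$ by $x\mapsto|x|$.

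For \eqref{la1}, the point $A$ has coordinates $(\alpha,\beta)$, so $A\in\mathscr{C}_\lambda$ forces $A$ to lie on the subset of the curve whose first coordinate equals the integer $\alpha$. From the description above this subset is precisely the vertical edge $\{(\alpha,b):\lambda_{\alpha+1}\le b\le\lambda_\alpha\}$ — the two horizontal edges incident to the line $a=\alpha$ meet it only at the endpoints of this segment. Hence $\lambda_{\alpha+1}\le\beta\le\lambda_\alpha$, which is \eqref{la1}. This step is purely a matter of unwinding the definition.

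For \eqref{eq15} I would first change coordinates. Expanding $a\vec u+b\vec v$ in the canonical basis gives the standard coordinates $X=\tfrac{\sqrt2}{2}(a-b)$ and $Y=\tfrac{\sqrt2}{2}(a+b)$. Since $\mathscr{C}_\lambda$ is the graph of $L_\lambda$, and since $X$ (hence $a-b$) is strictly increasing along the curve, there is a unique point of $\mathscr{C}_\lambda$ with abscissa $X=\tfrac{\sqrt2}{2}i$, i.e. with $a-b=i$, and at that point $Y=L_\lambda\!\left(\tfrac{\sqrt2}{2}i\right)$. Consequently
\begin{align*}
\tfrac{\sqrt2}{2}L_\lambda\!\left(\tfrac{\sqrt2}{2}i\right)+\tfrac{i}{2}=\tfrac{a+b}{2}+\tfrac{a-b}{2}=a,\qquad \tfrac{\sqrt2}{2}L_\lambda\!\left(\tfrac{\sqrt2}{2}i\right)-\tfrac{i}{2}=\tfrac{a+b}{2}-\tfrac{a-b}{2}=b,
\end{align*}
so that \eqref{eq15} is equivalent to showing that this crossing point has nonnegative integer coordinates $a,b\in\mathbb{N}$. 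To prove integrality I would examine the two edge types: on a vertical edge $a=\alpha$ the relation $a-b=i$ gives $b=\alpha-i\in\mathbb{Z}$ while $a=\alpha\in\mathbb{Z}$; on a horizontal edge $b=\lambda_k$ it gives $a=i+\lambda_k\in\mathbb{Z}$, but the relative interior of that edge satisfies $k-1<a<k$, which no integer can, so the crossing must be at a corner, again a lattice point. In either case $a,b\in\mathbb{Z}$, and nonnegativity follows since $\mathscr{C}_\lambda$ stays in the quadrant $\{a\ge0,\ b\ge0\}$ (on the two infinite rays one has $a=0$ or $b=0$). This yields $a,b\in\mathbb{N}$ and hence \eqref{eq15}.

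The only genuinely delicate point is the horizontal-edge case: one must observe that an integer level $a-b=i$ is never attained in the relative interior of a horizontal edge, precisely because $a=i+\lambda_k$ is then forced to be an integer, so every such crossing lands on an integer corner rather than strictly inside an edge. Everything else — the coordinate change, the monotonicity of $X$ along a graph, and the handling of the infinite rays for large $|i|$ — is routine bookkeeping with the piecewise-linear description of $\mathscr{C}_\lambda$.
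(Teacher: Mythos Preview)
The paper does not include its own proof of this lemma; it is quoted verbatim from \citep[Lemma~33]{2018arXiv180505253S} and simply used as an input to the proof of Lemma~\ref{34}. Your argument is correct and is exactly the kind of direct reading of the staircase curve one would expect: for \eqref{la1} you identify the vertical fibre $a=\alpha$ of $\mathscr{C}_\lambda$ with the segment $\{\lambda_{\alpha+1}\le b\le\lambda_\alpha\}$, and for \eqref{eq15} your change of coordinates reduces the claim to showing that the unique point of $\mathscr{C}_\lambda$ with $a-b=i$ has $a,b\in\mathbb{N}$, which follows since on every edge one of $a,b$ is already an integer and $a-b=i\in\mathbb{Z}$ forces the other to be as well. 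One minor simplification: your detour through ``no integer lies in the open interval $(k-1,k)$'' on horizontal edges is unnecessary, since on such an edge $b=\lambda_k\in\mathbb{Z}$ already gives $a=i+b\in\mathbb{Z}$ directly, without needing to locate the crossing at a corner.
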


\begin{figure}[H]
\centering
\begin{tikzpicture}    [/pgfplots/y=0.6cm, /pgfplots/x=0.6cm]
      \begin{axis}[
    axis x line=center,
    axis y line=center,
    xmin=0, xmax=10,
    ymin=0, ymax=10, clip=false,
    ytick={0},
	xtick={0},
    minor xtick={0,1,2,3,3,4,5,6,7,8,9},
    minor ytick={0,1,2,3,3,4,5,6,7,8,9},
    grid=both,
    legend pos=north west,
    ymajorgrids=false,
    xmajorgrids=false, anchor=origin,
    grid style=dashed    , rotate around={45:(rel axis cs:0,0)},
]
\addplot[
    color=blue,
        line width=3pt,
    ]
    coordinates {
    (0,9)(0,7)(1,7)(1,5)(2,5)(2,2)(3,2)(3,1)(5,1)(5,0)(9,0)
    };
    \addplot[
    color=green,
        line width=2pt,
    ]
    coordinates {
    (0,9)(0,4)(1,4)(1,4)(2,4)(2,3)(5,3)(5,1)(9,1)(9,0)(9,0)
    };
    \addplot[cyan,
        quiver={u=\thisrow{u},v=\thisrow{v}},
        -stealth]
    table
    {
    x y u v
    0 0 1 0
    0 0 0 1
    };
        \node[cyan] at (axis cs: 0.2,0.8) {$\vec{v}$};
 \node[cyan] at (axis cs: 0.7,0.2) {$\vec{u}$};
                 \node[] at (axis cs: -0.2,-0.2) {$o$};

       \addplot [ mark=*, color=magenta] table {
0 4
1 5
};
    \node[color=magenta] at (axis cs: 0.7,5) {$k_{-4}=-1$};

       \addplot [ mark=*, color=magenta] table {
3 1
5 3
};
    \node[color=magenta] at (axis cs:4,1.8) {$k_2=2$};         
\end{axis}
\begin{axis}[
	axis x line=center,
    axis y line=center,
    xmin=-6, xmax=6,
    ymin=0, ymax=8, anchor=origin, clip=false,
    xtick={-7,-6,-5,-4,-3,-2,-1,0,1,2,3,4,5,6,7},
    ytick={0,1,2,3,3,4,5,6,7,8},
    legend pos=north west,
    ymajorgrids=false,
    xmajorgrids=false,rotate around={0:(rel axis cs:0,0)},
    grid style=dashed,
    ];
       
       \addplot[red,
        quiver={u=\thisrow{u},v=\thisrow{v}},
        -stealth]
    table
    {
    x y u v
    0 0 1 0
    0 0 0 1
    };
        \node[red] at (axis cs: 0.2,1) {$\vec{y}$};
 \node[red] at (axis cs: 0.8,0.25) {$\vec{x}$};

\end{axis}
    \end{tikzpicture}
     \caption{  An example  where $\lambda= {(7,5,2,1,1,\underline{0})}$ and $\mu={{(4,4,3,3,3,1,1,1,1,\underline{0})}}$}
     \label{figL2}
\end{figure}
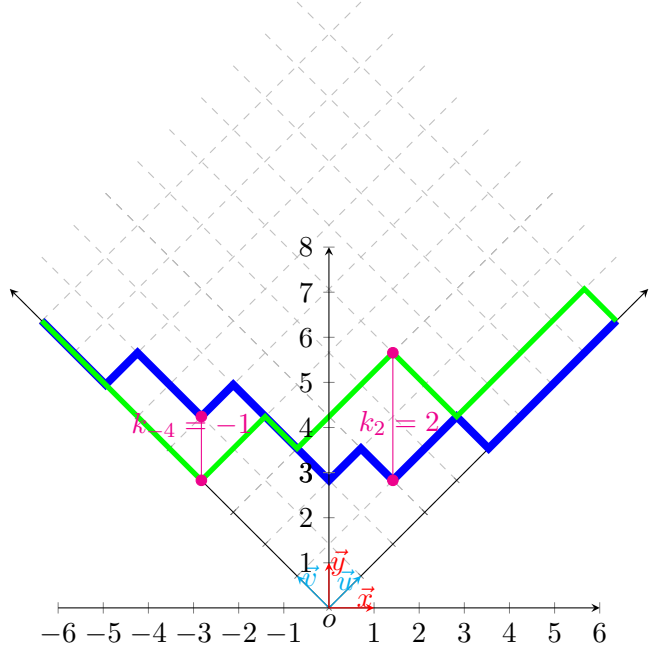
\begin{proof}[Proof of Lamma \ref{34}]
Note that for any $i\in \mathbb{Z}$, $s\mapsto L_{\lambda}(s)$  and  $s\mapsto L_{\mu}(s)$ are affine functions on  $\left[\frac{\sqrt{2}}{2}i,\frac{\sqrt{2}}{2}(i+1)\right]$ 
and thus \eqref{disVC} is equivalent to 
\begin{equation*}
\max_{i\in\mathbb{Z}}\left|L_\lambda\left(\frac{\sqrt{2}}{2}i\right)-L_\mu\left(\frac{\sqrt{2}}{2}i\right)\right|\leq  \min_{l\geq0} 2\sqrt{ \max_{m\geq l+1} \left|\sum_{k=l+1}^m( \lambda_k-\mu_k)\right|}+\sqrt{2}l.
\end{equation*}
Let $i \in \mathbb{Z}$. It follows from \eqref{eq15} that there exists $k_i \in \mathbb{Z}$ such that,  
\begin{align*}
L_{\mu}\left(\frac{\sqrt{2}}{2}i\right)- L_{\lambda}\left(\frac{\sqrt{2}}{2}i\right) = k_i\sqrt{2}.
\end{align*}
By symmetry  we can assume that $k_i>0$. We denote by $$j:={\sqrt{2}L_{\lambda}\left(\frac{\sqrt{2}}{2}i\right)}.$$
Let $A$ and $B$ be the points such that 
\begin{align*}
\overrightarrow{OA}:=\frac{\sqrt{2}}{2}(i\vec{x}+j\vec{y})=\frac{i+j}{2}\vec{u}+\frac{j-i}{2}\vec{v}, \qquad \overrightarrow{OB}=\frac{\sqrt{2}}{2}(i\vec{x}+(j+2k_i)\vec{y})=\frac{i+j+2k_i}{2}\vec{u}+\frac{j-i+2k_i}{2}\vec{v}.
\end{align*}
 Clearly $A\in \mathscr{C}_\lambda$ and $B\in \mathscr{C}_\mu$.    By \eqref{eq15}, $\frac{i+j}{2},\frac{j-i}{2} \in \mathbb{N}$. We can then apply \eqref{la1}.  In the case where $k_i>0$, we have
\begin{align*}
\lambda_{\frac{i+j}{2}+1} \leq \frac{j-i}{2}, \quad
\mu_{\frac{i+j}{2}+k_i} \geq \frac{j-i}{2}+k_i.
\end{align*}
Let $ 0\leq l< k_i$, using the fact that $(\lambda_{l})_{l\geq 1}$ and  of $(\mu_{l})_{l\geq 1}$ are decreasing, we have, 
\begin{align*}
2 \max_{m\geq l+1} \left|\sum_{k=l+1}^m( \lambda_k-\mu_k)\right| &\geq \sum_{m={\max(\frac{i+j}{2}+1},l+1)}^{\frac{i+j}{2}+k_i}\mu_m-\lambda_m \\&\geq \sum_{m={\max(\frac{i+j}{2}+1},l+1)}^{\frac{i+j}{2}+k_i}\mu_{\frac{i+j}{2}+k_i}-\lambda_{\frac{i+j}{2}+1}\geq k_i(k_i-l)\geq (k_i-l)^2. 
\end{align*}

This yields 
\begin{align*}
2\sqrt{ \max_{m\geq l+1} \left|\sum_{k=l+1}^m( \lambda_k-\mu_k)\right|}+\sqrt{2}l \geq \max_{i\in\mathbb{Z}}  \left(\sqrt{2}k_i\right)=\sup_{s\in\mathbb{R}}\left|L_\lambda(s)-L_\mu(s)\right|.
\end{align*}
\end{proof}

A crucial observation for our proof is that the length of the blocks in the RS image of a permutation with a significant number of fixed points can be bounded by the length of the word obtained after removing those fixed points. Let $\sigma$ denote a permutation, $fix(\sigma)$ represent its set of fixed points, and $\tau$ be the  permutation obtained from $\sigma$ after removing $fix(\sigma)$. 
Throughout the rest of this paper, we will use the notation $\sigma=(fix(\sigma),\tau)$ for brevity.
\begin{lemma} \label{l15}
Let $\sigma=(fix(\sigma),\tau)$
\begin{itemize}
\item  
\begin{align}\label{leq1}
    \#_1(\sigma) \leq \lambda_1(\sigma) \leq  \#_1(\sigma)+\lambda_1(\tau)
\end{align}
\item For all $i\geq 2$,
\begin{align}\label{leq2}
    \#_1(\sigma) +\sum_{j=1}^{i-1}\lambda_j(\tau) \leq \sum_{j=1}^i \lambda_j(\sigma) \leq  \#_1(\sigma)+\sum_{j=1}^i\lambda_j(\tau)
\end{align}
\item 
\begin{align}\label{leq3}
    \max_{m\geq 2} \left|\sum_{k=2}^m( \lambda_k(\sigma)-\lambda_k(\tau))\right| \leq \lambda_1(\tau)
\end{align} 
\item \begin{align} \label{leq4}
    \ell(\lambda(\tau)) \le \ell(\lambda(\sigma))\le \ell(\lambda(\tau)) +1
\end{align} 
\end{itemize}

\end{lemma}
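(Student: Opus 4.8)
The plan is to exploit the standard description of the RS shape via monotone subsequences (Proposition~\ref{RSKLEMMA}) together with the decomposition $\sigma=(fix(\sigma),\tau)$. Write $f:=\#_1(\sigma)$ for the number of fixed points, and list them as $p_1<p_2<\dots<p_f$. The key point is that the positions of $\sigma$ split into the fixed-point positions, on which $\sigma$ acts as the identity, and the remaining positions, on which $\sigma$ restricts to a permutation order-isomorphic to $\tau$.

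\textbf{Lower bounds.} For \eqref{leq1} and the left inequality of \eqref{leq2}, I would build long increasing subsequences of $\sigma$ by gluing. The subsequence consisting of all fixed points $p_1,\dots,p_f$ is increasing (since $\sigma(p_k)=p_k$), giving $\lambda_1(\sigma)\ge f$. More generally, if $s'$ is a union of $i-1$ increasing subsequences of the non-fixed part (realizing $\sum_{j=1}^{i-1}\lambda_j(\tau)$ via Greene's theorem applied to $\tau$), then together with the single increasing run of all fixed points we obtain an element of $\mathfrak{I}_i(\sigma)$ of size $f+\sum_{j=1}^{i-1}\lambda_j(\tau)$; here one uses that a set of fixed points is trivially increasing \emph{and} decreasing, so it can be merged with increasing subsequences of $\tau$ regardless of how the fixed points interleave with the non-fixed positions. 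This gives the left inequality of \eqref{leq2}, and the case $i=1$ recovers $f\le\lambda_1(\sigma)$.

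\textbf{Upper bounds.} For the right inequalities I would go the other direction: given any $s\in\mathfrak{I}_i(\sigma)$, remove from $s$ all fixed-point positions; what remains is a union of $i$ increasing subsequences of $\sigma$ restricted to non-fixed positions, hence corresponds (via the order-isomorphism with $\tau$) to an element of $\mathfrak{I}_i(\tau)$, so has size at most $\sum_{j=1}^i\lambda_j(\tau)$. Since we removed at most $f$ elements, $|s|\le f+\sum_{j=1}^i\lambda_j(\tau)$; taking the max over $s$ and invoking Proposition~\ref{RSKLEMMA} gives the right inequality of \eqref{leq2}, and again $i=1$ gives $\lambda_1(\sigma)\le f+\lambda_1(\tau)$. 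Then \eqref{leq3} is immediate algebra: by \eqref{leq2}, for every $m\ge 2$ we have $\sum_{k=2}^m(\lambda_k(\sigma)-\lambda_k(\tau))=\bigl(\sum_{k=1}^m\lambda_k(\sigma)-f\bigr)-\sum_{k=1}^m\lambda_k(\tau)+\bigl(f+\lambda_1(\tau)-\lambda_1(\sigma)\bigr)-\lambda_1(\tau)$; more cleanly, $\sum_{k=2}^m\lambda_k(\sigma)\le f+\sum_{k=2}^m\lambda_k(\tau)+\lambda_1(\tau)-f=\sum_{k=2}^m\lambda_k(\tau)+\lambda_1(\tau)$ using the telescoping of \eqref{leq2} between indices $m$ and $1$, and similarly $\ge \sum_{k=2}^m\lambda_k(\tau)-\lambda_1(\tau)$ using the left inequalities; combining gives $|\sum_{k=2}^m(\lambda_k(\sigma)-\lambda_k(\tau))|\le\lambda_1(\tau)$.

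\textbf{Decreasing direction.} For \eqref{leq4}, recall $\ell(\lambda(\sigma))=\lambda'_1(\sigma)$ is the longest decreasing subsequence length. A decreasing subsequence of $\sigma$ contains at most one fixed point (two fixed points $p_k<p_l$ would violate the decreasing condition since $\sigma(p_k)=p_k<p_l=\sigma(p_l)$), so deleting its fixed point, if any, leaves a decreasing subsequence of the non-fixed part, giving $\ell(\lambda(\sigma))\le\ell(\lambda(\tau))+1$; conversely any decreasing subsequence of the non-fixed part is one of $\sigma$, giving $\ell(\lambda(\sigma))\ge\ell(\lambda(\tau))$. The only mild subtlety throughout is keeping track of the order-isomorphism between $\sigma$ restricted to non-fixed positions and $\tau$, and checking that "merging with a block of fixed points" is legitimate because such a block is simultaneously an increasing and a decreasing subsequence; none of this is a serious obstacle, so the main care is simply in writing the gluing argument for the two-sided bound in \eqref{leq2} cleanly.
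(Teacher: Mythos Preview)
Your proof is correct and follows essentially the same route as the paper: both arguments apply Greene's theorem (Proposition~\ref{RSKLEMMA}) by splitting or merging subsequences along the fixed-point/non-fixed-point decomposition, and then obtain \eqref{leq3} by subtracting \eqref{leq1} from \eqref{leq2}. One small correction: a set of \emph{several} fixed points is increasing but \emph{not} decreasing---this is harmless since only membership in $\mathfrak{I}_1(\sigma)$ is needed for the merging step; as a bonus, you actually supply an argument for \eqref{leq4}, which the paper's written proof omits.
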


\begin{proof} Those  inequalities are an application of Greene's theorem.
\begin{itemize}
    \item We have the inclusion  $fix(\sigma) \subset \mathfrak{I}_1(\sigma)$ which implies the first inequality of~\eqref{leq1}. 
    \item Let  $i_1<i_2<\dots<i_{\lambda_1(\sigma)}$ is an increasing subsequence of $\sigma$ of maximal length. We have
    \begin{align*}\{i_1,i_2,\dots i_{\lambda_1(\sigma)}\}=(\{i_1,i_2,\dots i_{\lambda_1(\sigma)}\}\cap fix(\sigma)) &\cup (\{i_1,i_2,\dots i_{\lambda_1(\sigma)}\}\cap (\{1,2,\dots,n\}\setminus fix(\sigma))),
    \\ (\{i_1,i_2,\dots i_{\lambda_1(\sigma)}\}\cap fix(\sigma)) &\subset fix(\sigma),
    \\ (\{i_1,i_2,\dots i_{\lambda_1(\sigma)}\}\cap (\{1,2,\dots,n\}\setminus fix(\sigma))) &\subset \mathfrak{I}_1(\tau).
    \end{align*}
    This yields the second inequality of \eqref{leq1}
    \item Assume $I \subset \mathfrak{I}_{i-1}(\tau)\subset\mathfrak{I}_{i-1}(\sigma) $. Then  $(I\cup fix(\sigma))\subset \mathfrak{I}_{i}(\sigma)$. This yields the first inequality of \eqref{leq2}.
    \item Let $I \subset  \mathfrak{I}_{i}(\sigma)$ of maximal length. Then
        \begin{align*}I=(I\cap fix(\sigma)) &\cup (I\cap (\{1,2,\dots,n\}\setminus fix(\sigma))),
    \\ (I\cap fix(\sigma)) &\subset fix(\sigma),
    \\ (I\cap (\{1,2,\dots,n\}\setminus fix(\sigma))) &\subset \mathfrak{I}_i(\tau).
    \end{align*}
    This yields the second inequality of \eqref{leq2}.
    \item By subtracting \eqref{leq1} from \eqref{leq2}. We obtain 
    \begin{align*}
    -\lambda_1(\tau)     \leq -\lambda_i(\tau)    \leq \sum_{j=2}^i( \lambda_j(\sigma)-\lambda_j(\tau)) \leq \lambda_1(\tau).
    \end{align*}
    This yields  \eqref{leq3}.
\end{itemize}
\end{proof}

\section{Probabilistic lemmas}
The key probabilistic observations is the following. 
\begin{lemma} \label{lem11}
    If $\sigma_n=(fix(\sigma_n),\tau_n)$ is conjugacy invariant then conditionally on $fix(\sigma_n)$,  $\tau_n$ is conjugacy invariant  i.e.
    for any $S\subset  [n]$, and $\rho$ a permutation of $[n]\setminus S$, and $\pi$ a permutation of size $n$. If $\mathbb{P}(\#_1(\sigma_n)=card(S)) \neq 0$ then 
    $$\mathbb{P}(\sigma_n= \tau | fix(\sigma_n)=S) = \mathbb{P}(\rho\sigma_n\rho^{-1}= \tau | fix(\sigma_n)=S) $$
\end{lemma}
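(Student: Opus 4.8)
The plan is to exploit the fact that the set of fixed points and the "non-trivial part" $\tau_n$ together determine $\sigma_n$, and that conjugation interacts cleanly with this decomposition. First I would fix a subset $S \subset [n]$ with $\mathbb{P}(\#_1(\sigma_n) = \mathrm{card}(S)) \neq 0$ and a permutation $\rho$ of $[n]\setminus S$; I extend $\rho$ to a permutation $\tilde{\rho}$ of $[n]$ by letting it act as the identity on $S$. The key elementary observation is that conjugation by $\tilde{\rho}$ preserves the set of fixed points setwise: for any permutation $\pi$, we have $\mathrm{fix}(\tilde{\rho}\pi\tilde{\rho}^{-1}) = \tilde{\rho}(\mathrm{fix}(\pi))$, and since $\tilde{\rho}$ is the identity on $S$, if $\mathrm{fix}(\pi) = S$ then $\mathrm{fix}(\tilde{\rho}\pi\tilde{\rho}^{-1}) = S$ as well. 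In particular the event $\{\mathrm{fix}(\sigma_n) = S\}$ is invariant under $\pi \mapsto \tilde{\rho}\pi\tilde{\rho}^{-1}$.

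Next I would use the conjugacy invariance of $\sigma_n$: since $\tilde{\rho}\sigma_n\tilde{\rho}^{-1} \overset{d}{=} \sigma_n$, for any measurable set $E$ of permutations, $\mathbb{P}(\sigma_n \in E, \mathrm{fix}(\sigma_n) = S) = \mathbb{P}(\tilde{\rho}\sigma_n\tilde{\rho}^{-1} \in E, \mathrm{fix}(\tilde{\rho}\sigma_n\tilde{\rho}^{-1}) = S) = \mathbb{P}(\sigma_n \in \tilde{\rho}^{-1}E\tilde{\rho}, \mathrm{fix}(\sigma_n) = S)$, where I used the fixed-point invariance from the previous step to rewrite the second event. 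Dividing by $\mathbb{P}(\mathrm{fix}(\sigma_n) = S) > 0$ gives that, conditionally on $\{\mathrm{fix}(\sigma_n) = S\}$, the law of $\sigma_n$ equals the law of $\tilde{\rho}\sigma_n\tilde{\rho}^{-1}$. Taking $E = \{\tau\}$ a singleton yields exactly the displayed identity (with $\rho$ understood as acting trivially outside $[n]\setminus S$, so that $\rho\sigma_n\rho^{-1}$ makes sense as a permutation of $[n]$). Finally I would note that, on the event $\{\mathrm{fix}(\sigma_n) = S\}$, $\sigma_n$ is determined by $\tau_n$ (its restriction to $[n]\setminus S$), and conjugation by $\tilde{\rho}$ acts on this restriction exactly as conjugation by $\rho$ in $\mathfrak{S}_{[n]\setminus S}$; hence the statement rephrases as: conditionally on $\mathrm{fix}(\sigma_n) = S$, $\tau_n$ is conjugacy invariant as a permutation of $[n]\setminus S$.

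The only mildly delicate point — and the one I would be most careful about — is bookkeeping with the two different ambient symmetric groups: $\rho \in \mathfrak{S}_{[n]\setminus S}$ versus its trivial extension $\tilde{\rho} \in \mathfrak{S}_n$, and checking that the decomposition $\sigma_n = (\mathrm{fix}(\sigma_n), \tau_n)$ is equivariant under this extension, i.e. that $\widetilde{\rho\tau_n\rho^{-1}} = \tilde{\rho}\widetilde{\tau_n}\tilde{\rho}^{-1}$ on the conditioning event. This is routine but must be spelled out so that the conjugacy-invariance statement for $\tau_n$ is not vacuous or mismatched. Everything else is a direct transfer of the global invariance through the conditioning, using that the conditioning event is itself invariant.
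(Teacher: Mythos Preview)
Your proposal is correct and is precisely the argument the paper has in mind: the paper's own ``proof'' is the single sentence that the lemma is a direct consequence of the definition of conditional probability and conjugacy invariance, and you have simply unpacked this by extending $\rho$ trivially to $\tilde\rho\in\mathfrak{S}_n$, noting that $\{\mathrm{fix}(\sigma_n)=S\}$ is $\tilde\rho$-conjugation invariant, and then transferring the global invariance through the conditioning. The only small remark is that the hypothesis is $\mathbb{P}(\#_1(\sigma_n)=\mathrm{card}(S))\neq 0$ rather than $\mathbb{P}(\mathrm{fix}(\sigma_n)=S)>0$; these are equivalent because conjugacy invariance forces all subsets of the same cardinality to be equiprobable as fixed-point sets, which you may want to state in one line before dividing.
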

The proofs is a direct result of the definition of conditinal probability and conjugacy invariance. 
This lemma allows us to use the known results for permutations with no (or few) fixed points. We recall those results in Appendix A.
\begin{lemma}\label{l17} Assume that for any $n$, 
$ \sigma_n=(fix(\sigma_n), \tau_n)$ is a conjugacy invariant  involution of size $n$.

 Then for all $\varepsilon>0$,
\begin{align*}
\sup_{s\in \mathbb{R}} \left|\frac{1}{2\sqrt{n}}L_{\lambda(\sigma_n)}\left({2s\sqrt{n}}\right)-\sqrt{1-\frac{\#_1(\sigma_n)}{n}}\Omega\left(s \sqrt{1-\frac{\#_1(\sigma_n)}{n}} \right)\right|\xrightarrow[n\to\infty]{\mathbb{P}}0.
\end{align*}
\end{lemma}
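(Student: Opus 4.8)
The plan is to condition on the fixed points of $\sigma_n$, reduce to a uniform fixed-point-free involution of random size, and feed the known limit shape of such involutions back into the combinatorial controls of Section~2.

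First I would condition on $fix(\sigma_n)$. By Lemma~\ref{lem11}, conditionally on $\{fix(\sigma_n)=S\}$ the permutation $\tau_n$ is conjugacy invariant on $[n]\setminus S$; since $\sigma_n$ is an involution, $\tau_n$ is a fixed-point-free involution, and all fixed-point-free involutions on a given ground set share the same cycle type, hence form a single conjugacy class. Therefore, conditionally, $\tau_n$ is the uniform fixed-point-free involution of size $m_n:=n-\#_1(\sigma_n)$ (so $m_n$ is even almost surely). On the event $\{m_n\to\infty\}$, the results recalled in Appendix~A give $\sup_{t\in\mathbb{R}}\bigl|\tfrac{1}{2\sqrt{m_n}}L_{\lambda(\tau_n)}(2t\sqrt{m_n})-\Omega(t)\bigr|\xrightarrow[n\to\infty]{\mathbb{P}}0$ together with the crude control $\lambda_1(\tau_n)=O(\sqrt{m_n})$ in probability; the complementary regime $m_n=O(1)$ forces $\#_1(\sigma_n)/n\to 1$ and is handled directly, since then $\lambda(\sigma_n)$ is a single very long row with a bounded tail.

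Next I would transfer from $\tau_n$ to $\sigma_n$. Applying Lemma~\ref{34} to the pair $(\lambda(\sigma_n),\lambda(\tau_n))$ with $l=1$ and invoking inequality~\eqref{leq3} of Lemma~\ref{l15} gives $\sup_{x\in\mathbb{R}}\bigl|L_{\lambda(\sigma_n)}(x)-L_{\lambda(\tau_n)}(x)\bigr|\le 2\sqrt{\lambda_1(\tau_n)}+\sqrt{2}$, which by the previous bound is $o(\sqrt{n})$ in probability. Geometrically this merely says that re-inserting $\#_1(\sigma_n)$ fixed points lengthens the first row by roughly $\#_1(\sigma_n)$ (by~\eqref{leq1}--\eqref{leq2}) and the first column by at most one (by~\eqref{leq4}), so it only prolongs a thin slope-$\pm1$ spur of the Russian profile without disturbing the bulk. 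After dividing by $2\sqrt{n}$ it therefore remains to analyse $\tfrac{1}{2\sqrt{n}}L_{\lambda(\tau_n)}(2s\sqrt{n})$.

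Finally the conclusion follows from a scaling identity: with $t=s\sqrt{n/m_n}$ and $m_n/n=1-\#_1(\sigma_n)/n$,
\[
\frac{1}{2\sqrt{n}}L_{\lambda(\tau_n)}(2s\sqrt{n})=\frac{\sqrt{m_n}}{\sqrt{n}}\cdot\frac{1}{2\sqrt{m_n}}L_{\lambda(\tau_n)}\bigl(2t\sqrt{m_n}\bigr),
\]
so inserting the limit shape of $\tau_n$ from the first step identifies the uniform (in $s$) limit of $\tfrac{1}{2\sqrt{n}}L_{\lambda(\sigma_n)}(2s\sqrt{n})$ with the asserted scaled Vershik--Kerov--Logan--Shepp profile. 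I expect the real difficulty to lie not in this last computation but in the probabilistic bookkeeping of the first two steps: $m_n$ (equivalently $\#_1(\sigma_n)$) is random, so one must make the limit shape for uniform fixed-point-free involutions apply uniformly over the relevant range of $m_n$ --- conditioning on $fix(\sigma_n)$, passing to subsequences along which $\#_1(\sigma_n)/n$ converges, and separately dispatching the degenerate regime where $n-\#_1(\sigma_n)$ remains bounded.
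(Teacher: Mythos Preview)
Your proposal is correct and follows essentially the same route as the paper: condition on $fix(\sigma_n)$ so that $\tau_n$ is a uniform fixed-point-free involution (Lemma~\ref{lem11}), bound $\sup_x|L_{\lambda(\sigma_n)}(x)-L_{\lambda(\tau_n)}(x)|\le 2\sqrt{\lambda_1(\tau_n)}+\sqrt{2}$ via Lemma~\ref{34} with $l=1$ and \eqref{leq3}, and then feed in Corollary~\ref{Meliot_kam} at scale $m_n=n-\#_1(\sigma_n)$. Your explicit scaling identity and your handling of the degenerate regime $m_n=O(1)$ and of the randomness of $m_n$ are in fact more careful than the paper's write-up, but the underlying argument is the same.
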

\begin{proof}
Using Lemma \ref{34} we obtain 
\begin{align*}
\sup_{s\in\mathbb{R}}\left|L_{\lambda(\sigma_n)}(s)-L_{\lambda(\tau_n)}\right| \leq \ 2\sqrt{ \max_{m\geq 2} \left|\sum_{k=l+1}^m( \lambda_k(\sigma_n)-\lambda_k(\tau_n))\right|}+\sqrt{2}.
\end{align*}
By Lemma \ref{l15}, we obtain

\begin{align*}
\sup_{s\in\mathbb{R}}\left|L_{\lambda(\sigma_n)}(s)-L_{\lambda(\tau_n)}\right| \leq \ 2\sqrt{\lambda_1(\tau_n)}+\sqrt{2}.
\end{align*}
Observe that conditionally on $fix(\sigma_n)$, $\tau_n$ is a unifom fixed point free involution of size $n-fix(\sigma_n)$.

Consequently, a direct application of Corollary~\ref{Meliot_kam} for any $\varepsilon,\varepsilon'>0$ there exits $n_0$ such that for any $S$ of cardinal at least $n_0$, conditionally on $fix(\sigma_n)=S, $

$$\mathbb{P} \left(\sup_{s\in \mathbb{R}} \left|\frac{1}{2\sqrt{n}}L_{\lambda(\tau_n)}\left({2s\sqrt{n}}\right)-\sqrt{1-\frac{\#_1(\sigma)}{n}}\Omega\left(s \sqrt{1-\frac{\#_1(\sigma_n)}{n}} \right)\right|>\varepsilon\right) <\varepsilon'. $$
This yields Lemma~\ref{l17}.

\end{proof}

\section{Proof of main results}
\begin{proof}[Proof of Theorem~\ref{thm2}]
The idea is to prove the result separately  when 
$\frac{{\#(\sigma_n)- \#_1(\sigma_n)} }{n}  \xrightarrow[n\to\infty]{\mathbb{P}} 0$ and when $\frac{{n-\#_1(\sigma_n^2)} }{n}  \xrightarrow[n\to\infty]{\mathbb{P}} 0.$

\begin{itemize}
    \item Assume that 
    $\frac{{\#(\sigma_n)- \#_1(\sigma_n)} }{n}  \xrightarrow[n\to\infty]{\mathbb{P}} 0$ 
    The proof in this case is similar to that of Lemma~\ref{l17}. Indeed, 
    \begin{align*}
\sup_{s\in\mathbb{R}}\left|L_{\lambda(\sigma_n)}(s)-L_{\lambda(\tau_n)}\right| \leq \ 2\sqrt{\lambda_1(\tau_n)}+\sqrt{2}.
\end{align*}
holds true. 
Observe that conditionally on $fix(\sigma_n)$, $\tau_n$ is a uniform cyclic permutations.

Moreover, Lemma~\ref{VCthm}  imply that for a uniform cyclic permutation $\pi_n$,
\begin{align*} 
\sup_{s\in \mathbb{R}} \left|\frac{1}{2\sqrt{n}}L_{\lambda(\pi_n)}\left({2s\sqrt{n}}\right)-\Omega\left(s\right)\right|\xrightarrow[n\to\infty]{\mathbb{P}}0.
\end{align*}

The conclusion is the same as Lemma~\ref{l17}

\item Assume that, $\frac{{n-\#_1(\sigma_n^2)} }{n}  \xrightarrow[n\to\infty]{\mathbb{P}} 0.$. 

By conditioning on  on $\#_2(\sigma_n)=k$
This is a direct application of  \citep[Theorem 30]{KAMMOUN202276} (see the first example of section 4.2) by choosing   \begin{itemize}
    \item $(V_n,E'_n)={\mathcal{G}_\mathfrak{S}}_n$,  The cayley graph of the symmetric groupe. 
\item $I_n$=$\mathbb{Y}_n$, The set of Young diagrams 
 \item $V_n^i=\{\sigma ;$ Is the set of permutations with a cyclic structure $i$. 
 \item $Class(\sigma)$, is the cycle structure of $\sigma$
 \item $i^*_n=(\underbrace{2,\dots,2}_{k \text{ times}}, \underbrace{1,\dots,1}_{n-2k \text{ times}})$  the Young diagram k  rows of length $2$, 
 \item In this case it is easy to see that  $\underline{d}(\sigma)= \sum_{j>2} j\#j(\sigma)={n-\#_1(\sigma^2)}$.
\end{itemize}
The conditions (17)-(20) of \citep[Theorem 30]{KAMMOUN202276} have already been proved in \citep{KAMMOUN202276}.  
The convergence of reference measure is proved is Lemma~\ref{l17}.
The convergence (22) is trivial since the LHS is equal to $0$ almost surely and the convergence (23) is a direct application of \citep[Lemma 3.7]{2018arXiv180505253S}.   
\end{itemize}
This concludes the proof.
\end{proof}
\begin{proof}[Proof of Theorem~\ref{thm3}] We will each equation seperatly.
  \begin{itemize} 
\item  By \eqref{leq4},  we obtain 
 \begin{align*} 
    \ell(\lambda(\tau_n)) \le \ell(\lambda(\sigma_n))\le \ell(\lambda(\tau_n)) +1
\end{align*} 
By conditioning on $fix(\sigma_n)=S$, 
      and under \eqref{cond3.1},  by Lemma~\ref{lem11},
      $\tau_n$ satisfies the conditions of Lemma~\ref{l16} and 
      then 
for all $\varepsilon>0$, there exists some $n_0$ such that if $card(S)>n_0$, 
\begin{align*}
\mathbb{P}\left|\left(\frac{\ell(\lambda(\tau_n))-2\sqrt{card(S)}}{card(S)^\frac 16}  \leq  s\right)-F_2(s) \right|<\varepsilon.
\end{align*}
Which implies  \eqref{EQ3.1}. 

\item 
By conditioning on  on $\#_2(\sigma_n)=k$.
Note that the convergence in probability in  \citep[Theorem 30]{KAMMOUN202276}  can be replaced by a convergence in distribution and by applying it to $\ell(\lambda(\tau_n))$ by choosing   $V_n$,
 $I_n$, $V_n^i $, $Class(\sigma)$ and 
 $i^*_n$ as in the proof of Theorem~\ref{thm2}.
The convergence of reference measure is given by Proposition~\ref{BRProp}.
The convergence (22) is trivial since the LHS is equal to $0$ almost surely and under \eqref{cond3.2}, the convergence (23) is a direct application of \citep[Lemma 2.5]{KAMMOUN202276}.   This implies \eqref{EQ3.2}.
\item
We apply \citep[Theorem 30]{KAMMOUN202276}
to $\lambda_1(\tau_n)$  by choosing   $V_n$,
 $I_n$, $V_n^i $, $Class(\sigma)$ as in the proof of Theorem~\ref{thm2} and  by fixing
 $i^*_{2p}$ the partition with all blocks equal to $2$ and  $i^*_{2p+1}$ the partition with $p$ blocks equal to $2$ and 1 block equal to one. 
In this case it is easy to see that  $$\underline{d}(\sigma) \leq 1+ \sum_{j \neq 2} j\#j(\sigma)= 1+ {n-2\#_2(\sigma)}.$$ 
The convergence of reference measure is given by Proposition~\ref{BRProp}. 
Under \eqref{cond3.3}, the convergence (23) is a direct application of \citep[Lemma 2.5]{KAMMOUN202276}.   This implies \eqref{EQ3.3}.
      \item 
      
      Similarly to the proof of Theorem~\ref{thm2}, the idea is to prove the result separately  when 
$$\frac{{\#(\sigma_n)- \#_1(\sigma_n)} }{\sqrt{n}}  \xrightarrow[n\to\infty]{\mathbb{P}} 0$$ and when $$\frac{{n-\#_1(\sigma_n^2)} }{\sqrt{n}} \xrightarrow[n\to\infty]{\mathbb{P}} 0.$$
The first case,  by \eqref{leq4},  we obtain 
 \begin{align*} 
    \ell(\lambda(\tau_n)) \le \ell(\lambda(\sigma_n))\le \ell(\lambda(\tau_n)) +1
\end{align*} 
One need then to prove the convergence only for $\tau_n$.
Fix $\varepsilon>0,$ by conditioning on $fix(\sigma_n)=S$, $\tau_n$ is a conjugacy invariant permutations  of size $n-card(s)$ satisfying $\frac{\#(\tau_n)} {n-card(S)^\frac{1}{6}} <\varepsilon$ with high probability  and one can has by Lemma~\ref{l16}.

In the second case, we  use \citep[Theorem 30]{KAMMOUN202276} by choosing   $V_n$,
 $I_n$, $V_n^i $, $Class(\sigma)$ and 
 $i^*_n$ as in the proof of Theorem~\ref{thm2}. In this case we only need the convergence for the reference mesure i.e. we reduce the problem the proving the result for random conjugacy invaraint involution. 
 Now Let $\sigma_n$ be a random conjugacy invaraint permutation. Conditionally on $fix(\sigma_n)=S$, 
 $\tau_n$ is fixed point free convolution and one can conclude since 
  \begin{align*} 
    \ell(\lambda(\tau_n)) \le \ell(\lambda(\sigma_n))\le \ell(\lambda(\tau_n)) +1.
\end{align*}

  \end{itemize}

\end{proof}

\begin{proof}[Proof of Lemma~\ref{main_lemma}]
The equation \eqref{leq3} with $m=2$ gives that 
$$\lambda_2(\sigma_n) \leq  \lambda_2(\tau_n)+ \lambda_1(\tau_n) \leq  2 \lambda_1(\tau_n) $$

By Lemma~\ref{lem11}, with high probability $\tau_n$ is a conjugacy invariant random permutation without fixed points with a total number of cycles less than $\sqrt{n}$. We obtain $$\frac{\tau_n}{\sqrt{n}} \to 2 $$ 

\end{proof}
\begin{appendix}
\section{RS image for permutations with a microscopic number of fixed points}
\begin{proposition}\citep{MR1845180} \label{BRProp}
    Assume that $(\sigma_n)$ is a uniform involution of size $n$. 
    \[\lim_{n\to \infty} \mathbb{P}\left(\frac{\ell(\sigma_n)-2\sqrt{n}}{n^\frac 16}\leq s\right)=\lim_{n\to \infty} \mathbb{P}\left(\frac{\underline{\ell}(\sigma_n)-2\sqrt{n}}{n^\frac 16}\leq s\right)=F_1(s).
\]
    Assume that $(\sigma_n)$ is a uniform  fixed point free involution of size $n$.
  \[  \lim_{n\to \infty} \mathbb{P}\left(\frac{\ell(\sigma_n)-2\sqrt{n}}{n^\frac 16}\leq s\right)=\lim_{n\to \infty} \mathbb{P}\left(\frac{\underline{\ell}(\sigma_n)-2\sqrt{n}}{n^\frac 16}\leq s\right)=F_4(s).
\]
\end{proposition}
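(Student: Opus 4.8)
The plan is to obtain Proposition~\ref{BRProp} from the Riemann--Hilbert analysis of Baik and Rains~\citep{MR1845180}, after a purely combinatorial translation and a reduction of the two statistics $\ell$ and $\underline{\ell}$ to one. The first step is the dictionary provided by the Robinson--Schensted correspondence together with Greene's theorem (Proposition~\ref{RSKLEMMA}): an involution $\sigma$ is sent to a pair $(P,P)$ with $P$ a standard Young tableau of shape $\lambda(\sigma)$, so that $\ell(\lambda(\sigma))=\lambda'_1(\sigma)$ is exactly the number of rows of $\lambda(\sigma)$; moreover, by Schützenberger's theorem, $\#_1(\sigma)$ equals the number of columns of $\lambda(\sigma)$ of odd length. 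Hence the law of $\ell(\lambda(\sigma_n))$ for a uniform involution $\sigma_n$ is the law of the first column of a diagram drawn from the measure $\lambda\mapsto f^\lambda/I_n$ (where $f^\lambda$ is the number of standard Young tableaux of shape $\lambda$ and $I_n$ the number of involutions of $\{1,\dots,n\}$), and for a uniform fixed-point-free involution it is the law of the same quantity under this measure conditioned on all columns having even length.

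The second step is to quote the edge limit theorems for the longest \emph{decreasing} subsequence of a uniform (resp.\ fixed-point-free) involution proved in~\citep{MR1845180}, which state that $(\lambda'_1(\sigma_n)-2\sqrt n)/n^{1/6}$ converges in distribution to $F_1$ (resp.\ $F_4$). I would not reprove this: it is obtained by Poissonizing the model, recognising the Poissonized ensemble as a discrete orthogonal polynomial (Hahn-type) ensemble, performing a steepest-descent analysis of the attached Riemann--Hilbert problem, and de-Poissonizing via the Johansson / Baik--Deift--Johansson lemma. Conceptually it is exactly the Schützenberger bookkeeping of the first step that separates the $\beta=1$ and $\beta=4$ regimes, since forbidding fixed points forces all columns of the RSK shape to have even length.

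The third step removes $\underline{\ell}$ from the picture. For a fixed-point-free involution there is nothing to do, since then $\tau=\sigma$ and $\underline{\ell}(\sigma_n)=\ell(\lambda(\sigma_n))$. For a general uniform involution, \eqref{leq4} gives $|\ell(\lambda(\sigma_n))-\underline{\ell}(\sigma_n)|\le 1$, and since $n^{1/6}\to\infty$ while $F_1$ is continuous, Slutsky's lemma shows that $(\underline{\ell}(\sigma_n)-2\sqrt n)/n^{1/6}$ has the same limit. This same $O(1)$ comparison is, incidentally, what will later let Proposition~\ref{BRProp} serve as the ``reference measure'' input in the proof of Theorem~\ref{thm3} once one conditions on the set of fixed points.

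The only genuine obstacle is therefore the content of~\citep{MR1845180} itself, which I treat as a black box; everything else is the elementary RSK/Greene/Schützenberger translation together with the comparison between $\ell$ and $\underline{\ell}$, so I expect the write-up to be short modulo that citation.
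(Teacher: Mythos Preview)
The paper does not prove Proposition~\ref{BRProp}; it is placed in the appendix as a quoted result from \citep{MR1845180} with no argument supplied. Your proposal likewise defers the substantive analytic content to Baik--Rains as a black box (your step~2), so on the core point you and the paper agree: this is a citation, not something to be reproved here. Your steps~1 and~3 (the RSK/Sch\"utzenberger dictionary and the $\ell$-versus-$\underline{\ell}$ comparison via~\eqref{leq4}) are extra glosses that the paper does not provide, and indeed the symbol $\underline{\ell}$ is never defined in the paper, so your reading of it as $\ell(\lambda(\tau))$ is an interpretation rather than something you can check against the text.
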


\begin{proposition}
    \label{Meliot} \citep{Mliot2011KerovsCL} 
Assume that $(\sigma_n)$ is a uniform involution of size $n$. Then 
for all $\varepsilon>0$,
\begin{align*}
\lim_{n\to \infty} \mathbb{P}\left(\sup_{s\in \mathbb{R}} \left|\frac{1}{\sqrt{2n}}L_{\lambda(\sigma_n)}\left({s}{\sqrt{2n}}\right)-\Omega(s)\right|<\varepsilon\right) =1.
\end{align*}
\end{proposition}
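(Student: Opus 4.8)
The plan is to recognise Proposition~\ref{Meliot} as a law of large numbers for the probability measure $P_n$ on $\mathbb{Y}_n$ defined by $P_n(\lambda)=f^\lambda/I_n$, where $f^\lambda$ is the number of standard Young tableaux of shape $\lambda$ and $I_n=\sum_{\mu\vdash n}f^\mu$ is the number of involutions of $\mathfrak{S}_n$. Indeed, under the RS correspondence an involution $\sigma$ is sent to the pair $(P(\sigma),P(\sigma))$ with $P(\sigma)$ an arbitrary standard Young tableau, so if $\sigma_n$ is a uniform involution then $\lambda(\sigma_n)$ is exactly $P_n$-distributed. The whole statement thus reduces to showing that $P_n$ concentrates, in the uniform topology on rescaled profiles, at the Vershik--Kerov--Logan--Shepp curve; the key point is that, up to a $\lambda$-independent normalisation, $P_n$ carries the same exponential weight as the Plancherel measure $\lambda\mapsto(f^\lambda)^2/n!$, so it is governed by the same variational problem.

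Concretely I would rerun the argument of \cite{LOGAN1977206} and \cite{MR0480398}. Writing $\log f^\lambda=\log n!-\sum_{\square\in\lambda}\log h(\square)$ via the hook length formula, one has, for diagrams $\lambda\vdash n$ whose rescaled profile $\omega_\lambda$ ranges over a fixed compact set of $1$-Lipschitz functions, $\tfrac1n\sum_{\square\in\lambda}\log h(\square)=\tfrac12\log n+I_{\mathrm{hook}}(\omega_\lambda)+o(1)$ uniformly, where $I_{\mathrm{hook}}$ is the Vershik--Kerov hook functional, satisfying $I_{\mathrm{hook}}(\omega)\ge I_{\mathrm{hook}}(\Omega)$ with equality only at the VKLS shape. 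Hence $\tfrac1n\log f^\lambda=\tfrac12\log n-I_{\mathrm{hook}}(\omega_\lambda)+o(1)$, and since $\log I_n=\tfrac n2\log n-\tfrac n2+o(n)$ does not depend on $\lambda$ we obtain $\tfrac1n\log P_n(\lambda)=-\big(I_{\mathrm{hook}}(\omega_\lambda)-I_{\mathrm{hook}}(\Omega)\big)+c_n+o(1)$ with $c_n$ independent of $\lambda$. Because the number of partitions of $n$ is $e^{O(\sqrt n)}$, a union bound over the subexponentially many diagrams whose profile lies at uniform distance $\ge\varepsilon$ from $\Omega$ — combined with the quantitative strict minimality of $\Omega$ for $I_{\mathrm{hook}}$ on such a set, and with an a priori bound $\lambda_1(\sigma_n)\le(2+\varepsilon)\sqrt n$ with high probability (a first-moment estimate, or \cite{MR1845180}; the law $P_n$ is invariant under transposition of diagrams, so the same bound holds for $\lambda'_1(\sigma_n)$, which controls the unbounded tails where $\Omega(s)=|s|$) — forces $P_n\big(\sup_s|\omega_\lambda(s)-\Omega(s)|\ge\varepsilon\big)\to0$. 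Matching the scaling constants with the paper's normalisation of $L_\lambda$ is then bookkeeping.

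A cheaper route uses this paper's own machinery: a uniform involution is conjugacy invariant, and each index is a fixed point with probability $I_{n-1}/I_n=\Theta(n^{-1/2})$, so $\mathbb{E}[\#_1(\sigma_n)]=\Theta(\sqrt n)$ and $\#_1(\sigma_n)/n\xrightarrow[n\to\infty]{\mathbb{P}}0$ by Markov's inequality; then Lemma~\ref{l17} applies, and one replaces $\sqrt{1-\#_1(\sigma_n)/n}$ by $1$ at a cost of $o(1)$ since $\Omega$ is $1$-Lipschitz. This is only admissible if the chain of results behind Lemma~\ref{l17} — in particular Corollary~\ref{Meliot_kam} — does not itself rest on Proposition~\ref{Meliot}; otherwise the first, self-contained route is the one to follow. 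In both cases the real obstacle is the uniform control: pointwise concentration of the profile, or of a few diagram statistics, is routine, but passing to $\sup_{s\in\mathbb{R}}$, hence controlling the first row and the first column simultaneously, requires either the Logan--Shepp/Vershik--Kerov tightness estimates or the combinatorial squeeze of Lemma~\ref{34} and Lemma~\ref{l15} together with good control of the partial row sums that appear in Lemma~\ref{34}.
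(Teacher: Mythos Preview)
The paper does not prove Proposition~\ref{Meliot}: it is quoted in Appendix~A as a result of \cite{Mliot2011KerovsCL}, with no argument supplied. There is therefore no ``paper's own proof'' to compare against; the proposition functions purely as an input to the rest of the paper (in particular, Corollary~\ref{Meliot_kam} and Lemma~\ref{l17} are built on top of it).

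Your second route is indeed circular, exactly as you feared: Lemma~\ref{l17} invokes Corollary~\ref{Meliot_kam} in its proof, and Corollary~\ref{Meliot_kam} is presented as a corollary of Proposition~\ref{Meliot}. So that shortcut is not available.

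Your first route is a legitimate self-contained strategy, and the identification of the law of $\lambda(\sigma_n)$ with the Gelfand measure $P_n(\lambda)=f^\lambda/I_n$ is correct (RS restricts to a bijection between involutions and standard tableaux). It is worth noting, though, that this is not how \cite{Mliot2011KerovsCL} actually proceeds: M{\'e}liot works with polynomial observables on Young diagrams (the Ivanov--Olshanski framework for Kerov's CLT) and proves a central limit theorem for the Gelfand measure, from which the law of large numbers in Proposition~\ref{Meliot} follows. Your large-deviations sketch via the hook integral is closer in spirit to the original Logan--Shepp/Vershik--Kerov argument for the Plancherel measure; it would go through, but the delicate step is exactly the one you flag --- upgrading pointwise or compact-set control to the genuine $\sup_{s\in\mathbb{R}}$, which requires a separate a~priori bound on $\lambda_1$ and $\lambda_1'$ (the Baik--Rains input you mention suffices, and transposition symmetry of $P_n$ handles the other tail).
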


\begin{corollary}
    \label{Meliot_kam}
Assume that $(\sigma_n)$ is a uniform fixed point free involution of size $n$. Then 
for all $\varepsilon>0$,
\begin{align*}
\lim_{n\to \infty} \mathbb{P}\left(\sup_{s\in \mathbb{R}} \left|\frac{1}{\sqrt{2n}}L_{\lambda(\sigma_n)}\left({s}{\sqrt{2n}}\right)-\Omega(s)\right|<\varepsilon\right) =1.
\end{align*}    
\end{corollary}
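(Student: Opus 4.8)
The plan is to deduce the fixed-point-free case from the unrestricted case (Proposition~\ref{Meliot}) by the fixed-point-removal machinery of Sections~2--3, and then to transfer the conclusion from a random size to a deterministic one.

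\smallskip\noindent\emph{Step 1 (reduction to a uniform involution).} Let $\rho_m$ be a uniform involution of $\{1,\dots,m\}$ and write $\rho_m=(fix(\rho_m),\tau_m)$; conditionally on $fix(\rho_m)$, the permutation $\tau_m$ is a uniform fixed-point-free involution of the (random) size $N_m:=m-\#_1(\rho_m)$ (as in Lemma~\ref{lem11}, noting that $\tau_m$ is moreover \emph{uniform} among fixed-point-free involutions of its support). Exactly as in the proof of Lemma~\ref{l17}, Lemma~\ref{34} with $l=0$ together with \eqref{leq3} gives
\begin{equation*}
\sup_{s\in\mathbb{R}}\bigl|L_{\lambda(\rho_m)}(s)-L_{\lambda(\tau_m)}(s)\bigr|\le 2\sqrt{\lambda_1(\tau_m)}+\sqrt{2},
\end{equation*}
and since an increasing subsequence of $\tau_m$ is one of $\rho_m$ we have $\lambda_1(\tau_m)\le\lambda_1(\rho_m)$, which is $O_{\mathbb{P}}(\sqrt{m})$ already by Proposition~\ref{Meliot}; hence the right-hand side is $o_{\mathbb{P}}(\sqrt{m})$. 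Dividing by $\sqrt{2m}$ and invoking Proposition~\ref{Meliot}, $\frac{1}{\sqrt{2m}}L_{\lambda(\tau_m)}(s\sqrt{2m})\to\Omega(s)$ uniformly, in probability. Finally, since $L_{\lambda(\tau_m)}$ is $1$-Lipschitz and coincides with $|\cdot|$ outside an interval of length $O(\sqrt{m})$, $\Omega$ is continuous and coincides with $|\cdot|$ outside $[-1,1]$, and $N_m/m\to1$ in probability, a short uniform-in-$s$ estimate lets me replace the normalisation $\sqrt{2m}$ by $\sqrt{2N_m}$ at a cost $o_{\mathbb{P}}(1)$. Writing $D_k:=\sup_{s\in\mathbb{R}}\bigl|\frac{1}{\sqrt{2k}}L_{\lambda(\sigma)}(s\sqrt{2k})-\Omega(s)\bigr|$ for a uniform fixed-point-free involution $\sigma$ of size $k$, Step~1 says $D_{N_m}\xrightarrow[m\to\infty]{\mathbb{P}}0$.

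\smallskip\noindent\emph{Step 2 (de-randomisation).} Fix a deterministic even $k$ and set $m:=k+2\lceil\sqrt{k}\rceil$. Because $\#_1(\rho_m)$ concentrates around $\sqrt{m}$ with fluctuations $O_{\mathbb{P}}(m^{1/4})$, the event $E_k:=\{\,k\le N_m\le k+3\sqrt{k}\,\}$ has probability tending to $1$. On $E_k$, delete $(N_m-k)/2$ uniformly chosen transpositions from $\tau_m$: the resulting permutation $\sigma$ is a uniform fixed-point-free involution of size $k$, coupled to $\tau_m$, and is a sub-permutation of $\tau_m$ obtained by removing $N_m-k$ entries. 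Greene's theorem then gives, for every $r\ge1$,
\begin{equation*}
0\;\le\;\sum_{i=1}^{r}\bigl(\lambda_i(\tau_m)-\lambda_i(\sigma)\bigr)\;\le\;N_m-k
\end{equation*}
(the lower bound because $\mathfrak{I}_r(\sigma)\subset\mathfrak{I}_r(\tau_m)$, the upper bound because restricting an optimal $\mathfrak{I}_r$-set of $\tau_m$ to the retained positions discards at most $N_m-k$ points), so Lemma~\ref{34} with $l=0$ yields $\sup_s|L_{\lambda(\tau_m)}(s)-L_{\lambda(\sigma)}(s)|\le 2\sqrt{N_m-k}$, which on $E_k$ is $O(k^{1/4})$, i.e.\ $O(k^{-1/4})$ after dividing by $\sqrt{2k}$. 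Combining this, the change-of-normalisation estimate of Step~1 (now with normalisations $\sqrt{2N_m}$ and $\sqrt{2k}$, whose ratio tends to $1$ on $E_k$), and $D_{N_m}\to0$ in probability, one gets $D_k\le D_{N_m}+\delta_k$ on $E_k$ with a deterministic $\delta_k\to0$; hence $\mathbb{P}(D_k>\varepsilon)\le\mathbb{P}(E_k^{c})+\mathbb{P}(D_{N_m}>\varepsilon/2)\to0$ as $k\to\infty$, which is the statement of the Corollary.

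\smallskip\noindent\emph{Main obstacle.} Step~1 is a routine variant of the proof of Lemma~\ref{l17}. The delicate point is Step~2, the passage from a random to a deterministic size: the naive bound $\mathbb{P}(D_k>\varepsilon)\le\mathbb{P}(D_{N_m}>\varepsilon)/\mathbb{P}(N_m=k)$ is useless because $N_m$ is spread over $\sim m^{1/4}$ values while Proposition~\ref{Meliot} carries no explicit rate. Making the size-changing coupling $\sigma\prec\tau_m$ legitimate, controlling the two uniform-in-$s$ ``change of scale'' errors, and justifying the combinatorial inequality above via Greene's theorem are where I expect the work to be concentrated.
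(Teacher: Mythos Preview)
The paper states Corollary~\ref{Meliot_kam} in Appendix~A without proof; the label ``corollary'' almost certainly means that M\'eliot's character-theoretic argument for the Gelfand measure (uniform involutions) applies verbatim to the measure induced by uniform fixed-point-free involutions, not that the statement follows formally from Proposition~\ref{Meliot}. Your route is therefore genuinely different: you deduce the fixed-point-free case purely from the \emph{statement} of Proposition~\ref{Meliot}, using the paper's own fixed-point-removal estimates and a size-coupling to de-randomise. This is correct and self-contained, at the cost of the extra Step~2 machinery. Two small points: in Step~1 you want $l=1$ in Lemma~\ref{34}, not $l=0$ (the $+\sqrt{2}$ you wrote is $\sqrt{2}\,l$, and \eqref{leq3} starts at $k=2$); and in Step~2, $\sigma$ is only defined on $E_k$, so to make it unconditionally uniform you should complete the coupling by an independent uniform sample on $E_k^{c}$ before writing the final inequality. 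With those fixes, the edge-deletion coupling does produce (conditionally on $E_k$ and $N_m$) a permutation whose RS shape has the law of a uniform fixed-point-free involution of size $k$, the Greene bound $0\le\sum_{i\le r}(\lambda_i(\tau_m)-\lambda_i(\sigma))\le N_m-k$ is exactly right, and Lemma~\ref{34} with $l=0$ then gives the claimed $O(k^{1/4})$ control; the change-of-scale estimates are routine since the functions are $1$-Lipschitz and agree with $|s|$ outside a window of the correct order.
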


\begin{lemma}\label{VCthm} \cite[Theorem 8]{2018arXiv180505253S} 
Assume that the sequence of random permutations  $(\sigma_n)_{n\geq 1}$ is stable under conjugation and 
for all $\varepsilon>0$,
\begin{equation*}
\lim_{n\to \infty}\mathbb{P}\left(\frac{\#(\sigma_n)}{{n} }>\varepsilon\right) =0.
\end{equation*}
Then for all $\varepsilon>0$,
\begin{align*}
\lim_{n\to \infty} \mathbb{P}\left(\sup_{s\in \mathbb{R}} \left|\frac{1}{\sqrt{2n}}L_{\lambda(\sigma_n)}\left({s}{\sqrt{2n}}\right)-\Omega(s)\right|<\varepsilon\right) =1.
\end{align*}

\end{lemma}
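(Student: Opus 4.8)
The plan is to reduce the statement to the classical Logan--Shepp--Vershik--Kerov theorem for the uniform permutation (whose Robinson--Schensted shape is Plancherel distributed and is known by \cite{LOGAN1977206,MR0480398} to converge to $\Omega$), and then to transfer that convergence to an arbitrary conjugacy invariant permutation with $o(n)$ cycles. The first reduction uses invariance: since the law of $\lambda(\sigma_n)$ is unchanged when $\sigma_n$ is replaced by $\rho\sigma_n\rho^{-1}$ for an independent uniform $\rho\in\mathfrak{S}_n$, I would condition on the cycle type of $\sigma_n$ and work, for each realization, with $\sigma_n\overset{d}{=}\rho\gamma\rho^{-1}$, where $\gamma$ is a fixed permutation of that cycle type and $\rho$ is uniform. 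The hypothesis $\#(\sigma_n)/n\to 0$ in probability lets me restrict, on an event of probability tending to $1$, to cycle types with at most $r_n=o(n)$ parts, so it suffices to prove the convergence uniformly over all such deterministic cycle types.

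Second, I would use Lemma~\ref{34} to turn the sup-norm statement into control of the partial Greene sums $F_m(\sigma):=\sum_{k=1}^m\lambda_k(\sigma)=\max_{s\in\mathfrak{I}_m(\sigma)}|s|$ (Proposition~\ref{RSKLEMMA}). Taking for $\mu$ a deterministic partition of $n$ whose rescaled profile is exactly the target $\Omega$, Lemma~\ref{34} (choosing the free index $l$ growing slowly so that $\sqrt{2}\,l=o(\sqrt n)$) shows that it is enough to establish $\max_{m}\big|F_m(\sigma_n)-F_m^{\Omega}\big|=o(n)$ in probability, where the $F_m^{\Omega}$ are the partial sums of $\mu$. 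The square root and the free parameter $l$ in \eqref{disVC} are precisely what let me avoid controlling the delicate transition region and the individual small rows.

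The constants $F_m^{\Omega}$ I would pin down through concentration plus identification of the mean. For concentration, observe that if $\rho'=\rho\,(a\,b)$ then $\rho'\gamma\rho'^{-1}$ differs from $\rho\gamma\rho^{-1}$ by right multiplication by a product of at most two transpositions; since each transposition changes every $F_m$ by at most $2$, the map $\rho\mapsto F_m(\rho\gamma\rho^{-1})$ has bounded differences on the Cayley graph of $\mathfrak{S}_n$, so a McDiarmid-type inequality for the symmetric group gives $\tfrac1n F_m(\sigma_n)=\tfrac1n\mathbb{E}F_m(\sigma_n)+o(1)$ with overwhelming probability, uniformly in $m$ after a union bound. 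To identify the mean I would compare the planar empirical measure $\tfrac1n\sum_k\delta_{(\rho(k)/n,\ \rho(\gamma(k))/n)}$ with Lebesgue measure on $[0,1]^2$: writing the points as consecutive pairs along the cycles of $\gamma$, and using that for every fixed $L$ at most $L\,\#(\sigma_n)=o(n)$ elements lie in cycles of length $\le L$, these pairs equidistribute along the overwhelmingly long cycles, so the empirical measure converges weakly to Lebesgue measure exactly as for the uniform permutation. A Deuschel--Zeitouni-type variational principle for longest increasing subsequences, extended to all $F_m$ via Greene's theorem, then identifies the limit of $\tfrac1n\mathbb{E}F_m(\sigma_n)$ as the same functional of Lebesgue measure that governs the uniform case, namely $F_m^{\Omega}$.

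I expect the main obstacle to be this last identification uniformly in $m$: the classical machinery controls the top row $\lambda_1$ and the single functional $F_1$ cleanly, but promoting it to joint, uniform control of all the partial Greene sums $F_m$ — which is what the global sup-norm shape statement requires, and what Lemma~\ref{34} is tailored to consume — demands upgrading the weak convergence of the planar empirical measure to convergence of every multi-path last-passage functional, with an error $o(n)$ that is uniform in $m$ and over all cycle types with $o(n)$ parts. Everything else (the invariance reduction, the bounded-difference concentration, and the final application of Lemma~\ref{34}) is comparatively routine.
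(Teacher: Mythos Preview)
This lemma is not proved in the present paper: it is quoted in Appendix~A as \cite[Theorem~8]{2018arXiv180505253S} and used as a black box (for instance in the proof of Theorem~\ref{thm2}). So there is no ``paper's own proof'' here to compare against; what you have written is an attempt to reprove a result that the paper imports from an earlier reference.

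As a standalone sketch your strategy is coherent, and the reduction via conjugacy invariance together with the bounded-difference concentration for $\rho\mapsto F_m(\rho\gamma\rho^{-1})$ is sound. You are also right that Lemma~\ref{34} is exactly the device that converts uniform control of the partial Greene sums into the sup-norm shape statement. The step you flag as the obstacle is a genuine one: upgrading weak convergence of the planar empirical measure to uniform-in-$m$ convergence of all the multi-path last-passage functionals $F_m$ is not a formality, and the Deuschel--Zeitouni machinery in its standard form treats $m=1$. This is the place where your outline stops being a proof and becomes a plan.

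It is worth noting that the argument in \cite{2018arXiv180505253S} avoids this particular difficulty by a different route: rather than re-identifying the limit through a variational principle, it couples the conjugacy-invariant permutation to a uniform one at bounded Cayley distance (controlled by the number of cycles) and then uses the Lipschitz behaviour of the Greene sums under multiplication by a transposition --- the same $1$-Lipschitz property you invoke for concentration --- to transfer the already-known Plancherel limit shape directly. That approach trades your hardest step (the uniform variational identification) for a comparatively soft coupling estimate, and is what makes the predecessor of Lemma~\ref{34} sufficient there. Your route would, if completed, give an independent proof; the coupling route is shorter because it never leaves the Plancherel reference.
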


\begin{lemma}\citep{2018arXiv180505253S}\label{l16}
Assume that  for all positive integer $n$, $\sigma_n$ is a conjugacy invaraint permutation of size $n$ for all $\varepsilon>0$,
\begin{equation*}
\lim_{n\to \infty}\mathbb{P}\left(\frac{\#(\sigma_n)}{n^\frac 12 }>\varepsilon\right) =0.
\end{equation*}
Then ,
\begin{equation*}
\frac{\ell(\sigma_n)}{\sqrt{2}} \xrightarrow[n\to\infty]{\mathbb{P}} 2
\end{equation*}

Morover assume that for all $\varepsilon>0$,
\begin{equation*}
\lim_{n\to \infty}\mathbb{P}\left(\frac{\#(\sigma_n)}{n^\frac 16 }>\varepsilon\right) =0.
\end{equation*}
Then  for all  $s \in \mathbb{R}$,
\begin{equation*}
\lim_{n\to \infty} \mathbb{P}\left(\frac{\ell(\sigma_n)-2\sqrt{n}}{n^\frac 16}\leq s\right)=\lim_{n\to \infty} \mathbb{P}\left(\frac{\underline{\ell}(\sigma_n)-2\sqrt{n}}{n^\frac 16}\leq s\right)=F_2(s).
\end{equation*}
\end{lemma}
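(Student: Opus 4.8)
The plan is to treat both assertions by a single mechanism: realize $\sigma_n$ as a uniform $n$-cycle perturbed by a small number of transpositions, and transport the known limiting behaviour of the longest monotone subsequence of a generic permutation along this reduction. The hypotheses bound $\#(\sigma_n)$, hence the number of transpositions needed to merge all cycles of $\sigma_n$ into a single one, while the relevant statistics are Lipschitz under transpositions; conjugacy invariance is what lets us perform the merging against a \emph{uniform} reference.

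First I would record the Lipschitz property. For a transposition $t$, the permutation $\sigma t$ is obtained from $\sigma$ by swapping two values, so any monotone subsequence of $\sigma$ avoiding the two swapped positions is still monotone for $\sigma t$, and conversely. By Greene's theorem (Proposition~\ref{RSKLEMMA}) each Greene statistic—in particular $\ell(\sigma)$, $\underline{\ell}(\sigma)$ and $\lambda_1(\sigma)$—changes by at most $2$ under one transposition, since deleting two positions from a maximal monotone family loses at most two elements and the restriction remains admissible for $\sigma t$. Iterating, if $\sigma$ and $\sigma'$ lie at transposition distance $d$ on the Cayley graph of $\mathfrak{S}_n$, these statistics differ by at most $2d$.

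Next I would set up the coupling. Conditionally on the cycle type $\mu_n$ of $\sigma_n$ (equivalently on $\#(\sigma_n)=k_n$), write $\sigma_n\overset{d}{=}\alpha\rho\alpha^{-1}$ with $\alpha$ uniform and $\rho$ a fixed permutation of type $\mu_n$, then factor $\rho=c\,s_1\cdots s_{k_n-1}$ with $c$ an $n$-cycle and the $s_i$ transpositions (merging $k_n$ cycles into one costs $k_n-1$ transpositions). This gives $\sigma_n\overset{d}{=}(\alpha c\alpha^{-1})(\alpha s_1\alpha^{-1})\cdots(\alpha s_{k_n-1}\alpha^{-1})$, where $c'_n:=\alpha c\alpha^{-1}$ is a uniform $n$-cycle (marginally, for any $\mu_n$). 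Thus on a common space $\sigma_n$ sits within $\#(\sigma_n)-1$ transpositions of a uniform $n$-cycle, so $|\ell(\sigma_n)-\ell(c'_n)|\le 2(\#(\sigma_n)-1)$, and likewise for $\underline{\ell}$. The reference law is that of $c'_n$: a uniform $n$-cycle is itself within $O_{\mathbb{P}}(\log n)$ transpositions of a uniform permutation (same conjugation-and-factoring argument, as a uniform permutation has $O_{\mathbb{P}}(\log n)$ cycles), so it inherits both $\ell(c'_n)/\sqrt n\to 2$ (Logan--Shepp, Vershik--Kerov \citep{LOGAN1977206,MR0480398}) and $(\ell(c'_n)-2\sqrt n)/n^{1/6}\xrightarrow{d}F_2$ (Baik--Deift--Johansson), the $O_{\mathbb{P}}(\log n)$ correction being negligible at scale $n^{1/6}$.

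It then remains to combine. Under $\#(\sigma_n)/\sqrt n\xrightarrow{\mathbb{P}}0$ the perturbation $2(\#(\sigma_n)-1)/\sqrt n\to 0$ in probability, giving $\ell(\sigma_n)/\sqrt n\to 2$; under the stronger $\#(\sigma_n)/n^{1/6}\xrightarrow{\mathbb{P}}0$ the perturbation is $o_{\mathbb{P}}(n^{1/6})$, and a Slutsky argument upgrades the $F_2$ convergence of $\ell(c'_n)$ to that of $\ell(\sigma_n)$; the identical reasoning applies verbatim to $\underline{\ell}$. The main obstacle is the reference edge input: the limit-shape statement (Lemma~\ref{VCthm}) controls only the bulk profile and does not rule out a thin over-long first row—a single extra long row perturbs the rescaled height function by only $O(1/\sqrt n)$—so the upper tail $\lambda_1\le(2+\varepsilon)\sqrt n$, and a fortiori the $F_2$ fluctuations, must genuinely be imported from the uniform model through the transposition comparison. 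The care required is to run the comparison conditionally on $\mu_n$ and to check that the $O_{\mathbb{P}}(\log n)$ and $\#(\sigma_n)$ corrections are simultaneously negligible at the relevant scale.
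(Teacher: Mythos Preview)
The present paper does not give a proof of this lemma; it is quoted in Appendix~A as a result of the earlier work \citep{2018arXiv180505253S}. Your argument---couple $\sigma_n$ to a uniform $n$-cycle (and from there to a uniform permutation) by conjugation-and-factoring, use that $\ell$ and $\underline{\ell}$ are $2$-Lipschitz for the transposition distance, and import the Vershik--Kerov--Logan--Shepp and Baik--Deift--Johansson limits from the uniform model---is exactly the mechanism of that reference, and the proof is correct as written.
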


\end{appendix}

\end{document}